\documentclass[12pt]{article}

\RequirePackage[OT1]{fontenc}
\RequirePackage{amsthm,amsmath}
\RequirePackage[colorlinks,citecolor=blue,urlcolor=blue,linkcolor=blue]{hyperref}

\usepackage{amsmath}
\usepackage{amssymb}
\usepackage{amsthm}
\usepackage{dsfont}
\usepackage{graphicx}
\usepackage{color}
\usepackage{tikz}
\usepackage[margin=3cm]{geometry}

\definecolor{mycolor1}{rgb}{1,0,0}
\definecolor{mycolor2}{rgb}{0,0,1}
\definecolor{mycolor3}{rgb}{0.169,0.506,0.337}

\numberwithin{equation}{section}
\theoremstyle{plain}
\newtheorem{theorem}{Theorem}[section]

\newtheorem{definition}[theorem]{Definition}
\newtheorem{lemma}[theorem]{Lemma}
\newtheorem{proposition}[theorem]{Proposition}

\newtheorem{remark}[theorem]{Remark}

\newcommand{\RR}{\mathbb{R}}
\newcommand{\ZZ}{\mathbb{Z}}
\newcommand{\CC}{\mathbb{C}}
\newcommand{\EE}{\mathbb{E}}

\newcommand{\ddiag}{\operatorname{ddiag}}
\newcommand{\trace}[1]{\operatorname{Tr}\left(#1\right)}
\newcommand{\diag}{\operatorname{diag}}
\newcommand{\rank}{\operatorname{rank}}
\newcommand{\polylog}{\operatorname{polylog}}

\newcommand{\opnorm}[1]{\left\|#1\right\|_{\mathrm{op}}}

\newcommand{\inner}[2]{\left\langle #1, #2 \right\rangle }

\begin{document}

\title{Tightness of the maximum likelihood semidefinite relaxation for angular synchronization}

\author{Afonso S.\ Bandeira\thanks{Massachusetts Institute
		of Technology. bandeira@mit.edu, \url{http://math.mit.edu/\textasciitilde bandeira/}.}, \qquad Nicolas Boumal\thanks{Princeton University. nboumal@princeton.edu, \url{http://www.math.princeton.edu/\textasciitilde nboumal/}.}, \qquad Amit Singer\thanks{Princeton University. amits@math.princeton.edu, \url{http://www.math.princeton.edu/\textasciitilde amits/}.}}

\maketitle

{\let\thefootnote\relax\footnote{Authors are listed alphabetically. Please direct correspondence to nboumal@princeton.edu.}}

\begin{abstract}
	
Maximum likelihood estimation problems are, in general, intractable optimization problems. As a result, it is common to approximate the maximum likelihood estimator (MLE) using convex relaxations. In some cases, the relaxation is tight: it recovers the true MLE. Most tightness proofs only apply to situations where the MLE exactly recovers a planted solution (known to the analyst).
It is then sufficient to establish that the optimality conditions hold at the planted signal.
In this paper, we study an estimation problem (angular synchronization) for which the MLE is not a simple function of the planted solution, yet for which the convex relaxation is tight. To establish tightness in this context, the proof is less direct because the point at which to verify optimality conditions is not known explicitly.

Angular synchronization consists in estimating a collection of $n$ phases, given noisy measurements of the pairwise relative phases.
The MLE for angular synchronization is the solution of a (hard) non-bipartite Grothendieck problem over the complex numbers.
We consider a stochastic model for the data: a planted signal (that is, a ground truth set of phases) is corrupted with non-adversarial random noise.
Even though the MLE does not coincide with the planted signal, we show that the classical semidefinite relaxation for it is tight, with high probability. This holds even for high levels of noise.

\vspace{3mm}
Keywords: 
{Angular Synchronization};
{Semidefinite programming};
{Tightness of convex relaxation};
{Maximum likelihood estimation}.

\end{abstract}

\clearpage
\section{Introduction}

Recovery problems in statistics and many other fields are commonly solved under the paradigm of maximum likelihood estimation, partly due to the rich theory it enjoys. Unfortunately, in many important applications, the parameter space is exponentially large and non-convex, often rendering the computation of the maximum likelihood estimator (MLE) intractable. It is then common to settle for heuristics, such as expectation-maximization algorithms to name but one example. However, it is also common for such iterative heuristics to get trapped in local optima. Furthermore, even when these methods do attain a global optimum, there is in general no way to verify this.

A now classic alternative to these heuristics is the use of convex relaxations. The idea is to maximize the likelihood in a larger, convex set that contains the parameter space of interest, as (well-behaved) convex optimization problems are generally well understood and can be solved in polynomial time. The downside is that the solution obtained might not be in the original feasible (acceptable) set. One is then forced to take an extra, potentially suboptimal, rounding step.

This line of thought is the basis for a wealth of modern \emph{approximation algorithms}~\cite{williamson2011design}. One preeminent example is Goemans and Williamson's treatment of Max-Cut~\cite{goemans1995maxcut}, an NP-hard combinatorial problem that involves segmenting a graph in two clusters to maximize the number of edges connecting the two clusters. They first show that Max-Cut can be formulated as a semidefinite program (SDP)---a convex optimization problem where the variable is a positive semidefinite matrix---with the additional, non-convex constraint that the sought matrix be of rank one. Then, they propose to solve this SDP while relaxing (removing) the rank constraint. They show that the obtained solution, despite typically being of rank strictly larger than one, can be rounded to a (suboptimal) rank-one solution, and that it provides a guaranteed approximation of the optimal value of the hard problem. Results of the same nature abound in the recent theoretical computer science literature~\cite{goemans2004approximation,zhang2006complex,briet2014grothendieck,demanet2013convex,Bandeira_Singer_Spielman_OdCheeger,Bandeira_LittleGrothendieckOd,so2007approximating,luo2010semidefinite,zhang2006complex}.

In essence, approximation algorithms insist on solving \emph{all} instances of a given NP-hard problem in polynomial time, which, unless P = NP, must come at the price of accepting some degree of sub-optimality. This worst-case approach hinges on the fact that a problem is NP-hard as soon as every efficient algorithm for it can be hindered by at least one pathological instance.

Alternatively, in a non-adversarial setting where ``the data is not an enemy,'' one may find that such pathological cases are not prominent. As a result, the applied mathematics community has been more interested in identifying regimes for which the convex relaxations are tight, that is, admit a solution that is also admissible for the hard problem. When this is the case, no rounding is necessary and a truly optimal solution is found in reasonable time, together with a certificate of optimality.
This is sometimes achieved by positing a probability distribution on the instances of the problem and asserting tightness \emph{with high probability}, as for example in compressed sensing~\cite{Candes_CS1,Donoho_CS,Tropp_justrelax}, matrix completion~\cite{candes2009exact},
clustering%
~\cite{Abbe_Z2Synch} and inverse problems~\cite{Venkat_Geometryofinverseproblems,Tropp:LivingOnTheEdge}. In this approach, one surrenders the hope to solve all instances of the hard problem, in exchange for true optimality with high probability.

The results presented in this work are of that nature. As a concrete object of study, we consider the \emph{angular synchronization problem}~\cite{ASinger_2011_angsync,Bandeira_Singer_Spielman_OdCheeger}, which consists in estimating a collection of $n$ phases $e^{i\theta_1}, \ldots, e^{i\theta_n}$, given noisy measurements of pairwise relative phases $e^{i(\theta_k - \theta_\ell)}$---see Section~\ref{sec:synchro} for a formal description. This problem notably comes up in time-synchronization of distributed networks~\cite{giridhar2006distributed}, signal reconstruction from phaseless measurements~\cite{Alexeev_PhaseRetrievalPolarization,Bandeira_FourierMasks}, ranking~\cite{cucuringu2015syncrank}, digital communications~\cite{So_2010_SDP_detector}, and surface reconstruction problems in computer vision~\cite{agrawal2006surface} and optics~\cite{rubinstein2001optical}. Angular synchronization serves as a model for the more general problem of synchronization of rotations in any dimension, which comes up in structure from motion~\cite{matinec2007rotation,hartley2013rotation}, surface reconstruction from 3D scans~\cite{Wang_RobustSynchronization} and cryo-electron microscopy~\cite{ASinger_YShkolnisky_commonlines}, to name a few.

The main contribution of the present paper is a proof that, even though the angular synchronization problem is NP-hard~\cite{zhang2006complex}, its MLE in the face of Gaussian noise can often be computed (and certified) in polynomial time. This remains true even for entry-wise noise levels growing to infinity as the size of the problem (the number of phases) grows to infinity. The MLE is obtained as the solution of a semidefinite relaxation described in Section~\ref{sec:synchro}. This arguably striking phenomenon has been observed empirically before~\cite{bandeira2014open} (see also Figure~\ref{fig:experiment}), but not explained.

Computing the MLE for angular synchronization is equivalent to solving a non-bipartite Grothendieck problem. Semidefinite relaxations for Gro\-then\-dieck problems have been thoroughly studied in theoretical computer science from the point of view of approximation ratios. The name is inspired by its close relation to an inequality of Grothendieck~\cite{Grothendieck_GT}. We direct readers to the survey by Pisier~\cite{Pisier_GT} for a discussion. 

The proposed result is qualitatively different from most tightness results available in the literature. Typical results establish either exact recovery of a planted signal~\cite{Abbe_Z2Synch,huang2013consistent,Ames_14_Clustering} (mostly in discrete settings), or exact recovery in the absence of noise, joint with stable (but not necessarily optimal) recovery when noise is present~\cite{Candes_Strohmer_Voroninski_phaselift,Demanet_Phaseless,Candes_CS2,Wang_RobustSynchronization,demanet2013convex,zhang2015disentangling}. In contrast, this paper shows optimal recovery even though exact recovery is not possible. In particular, Demanet and Jugnon showed stable recovery for angular synchronization via semidefinite programming, under adversarial noise~\cite{demanet2013convex}. We complement this by showing tightness in a non-adversarial setting, meaning the actual MLE is computed.

A similar semidefinite relaxation was studied in a digital communications context, where the parameters to estimate are $m$th roots of unity~\cite{So_2010_SDP_detector}. There, non-asymptotic results show the relaxation approximates the MLE within some factor, with high probability. The present paper is related to the limit $m\to\infty$, although the noise model considered is different, and we focus on exact MLE computation.

Our proof relies on verifying that a certain candidate dual certificate is valid with high probability. The main difficulty comes from the fact that the dual certificate depends on the MLE, which does not coincide with the planted signal, and is a nontrivial function of the noise. We use necessary optimality conditions of the hard problem to both obtain an explicit expression for the candidate dual certificate, and to partly characterize the point whose optimality we aim to establish. This seems to be required since the MLE is not known in closed form.

In the context of sparse recovery, a result with similar flavor is support recovery guarantee~\cite{Tropp_justrelax}, where the support of the estimated signal is shown to be contained in the support of the original signal. Due to the noise, exact recovery is also impossible in this setting. Another example is a recovery guarantee in the context of latent variable selection in graphical models~\cite{Venkat_GraphicalModels}.

Besides the relevance of angular synchronization in and of its own, we are confident this new insight will help uncover similar results in other applications where it has been observed that semidefinite relaxations can be tight even when the ground truth cannot be recovered. Notably, this appears to be the case for the Procrustes and multi-reference alignment problems~\cite{Bandeira_Charikar_Singer_Zhu_Alignment,bandeira2014open}).

The crux of our argument concerns the rank of the solutions of an SDP.
We mention in passing that there are many other deterministic results in the literature pertaining to the rank of solutions of SDP's. For example, it has been shown %
that, in general, an SDP with only equality constraints admits a solution of rank at most (on the order of) the square root of the number of constraints, see~\cite{Shapiro_82_rank,pataki1998rank,barvinok1995problems}. Furthermore, Sagnol~\cite{sagnol2011rankone} shows that under some conditions (that are not fulfilled in our case), certain SDP's related to packing problems always admit a rank-one solution. Sojoudi and Lavaei~\cite{sojoudi2014exactness} study a class of SDP's on graphs which is related to ours and for which, under certain strong conditions on the topology of the graphs, the SDP's admit rank-one solutions---see also applications to power flow optimization~\cite{low2013convex}.

\subsection{Contribution}

We frame angular synchronization as the problem of estimating $z\in\CC^n$ with $|z_i| = 1 \ \forall i$, given measurements $C=zz^*+\sigma W$, where $W = W^*$ has i.i.d.\ complex standard Gaussian entries above its diagonal. The MLE $x$ maximizes $x^*Cx$ over the parameter space~\eqref{eq:P}. This is relaxed to $\max \trace{CX}$ s.t.\ $\diag(X) = \mathds{1}, X~\succeq~0$~\eqref{eq:SDP}. After proving exact recovery for \emph{real} $x$ and $X$ (Fig.~\ref{fig:realrecovery}), we focus on the \emph{complex} case (Fig.~\ref{fig:experiment}). Our main result is that, if $\sigma \leq \frac{1}{18}n^{1/4}$, then, with high probability, \eqref{eq:SDP} admits a unique solution of rank 1, revealing the global optimum of~\eqref{eq:P} (Thm.~\ref{Thm:rr_gaussian}). In order to do so, we characterize global optimizers of~\eqref{eq:P} as points $x$ which satisfy first- and second-order necessary optimality conditions and perform at least as well as $z$ in terms of the cost function of~\eqref{eq:P} (under conditions on the perturbation $\sigma W$).


\subsection{Notation}

For $a \in \CC$, $\overline a$ denotes its complex conjugate and $|a| = \sqrt{a\overline a}$ its modulus. For $v \in \CC^n$, $v^*$ denotes its conjugate transpose, $\|v\|^2 = \|v\|_2^2 = v^*v$, $\|v\|_\infty = \max_i |v_i|$ and $\|v\|_1 = \sum_i |v_i|$. $\mathds{1}$ is a vector of all-ones. $\diag(v)$ is a diagonal matrix with $\diag(v)_{ii} = v_i$. For a matrix $M$, $\opnorm{M}$ is the maximal singular value and
$\Re(M)$, $\Im(M)$
extract
the real
and imaginary
parts; $\diag(M)$ extracts the diagonal of $M$ into a vector and $\ddiag(M)$ sets all off-diagonal entries of $M$ to zero. We use the real inner product $\inner{u}{v} = \Re\{u^*v\}$ over $\CC^n$.
$\EE$ denotes mathematical expectation.

\section{The Angular Synchronization problem}
\label{sec:synchro}

We focus on the problem of \emph{angular synchronization}~\cite{ASinger_2011_angsync,Bandeira_Singer_Spielman_OdCheeger}, in which one wishes to estimate a collection of $n$ phases ($n\geq 2$) based on measurements of pairwise phase differences. We restrict our analysis to the case where a measurement is available for every pair of phases. More precisely, we let $z \in \CC^n$ be an unknown, complex vector with unit modulus entries, $|z_1| = \cdots = |z_n| = 1$, and we consider measurements of the form $C_{ij} = z_i\overline{z_j} + \varepsilon_{ij}$, where
$\varepsilon_{ij} \in \CC$ is noise affecting the measurement. By symmetry, we define $C_{ji} = \overline{C_{ij}}$ and $C_{ii} = 1$, so that the matrix $C \in \CC^{n \times n}$ whose entries are given by the $C_{ij}$'s is Hermitian.

Further letting the noise $\varepsilon_{ij}$ be i.i.d.\ (complex) Gaussian variables for $i<j$, it follows that an MLE for $z$ is any vector of phases $x$ minimizing $\sum_{i,j} |C_{ij}x_j - x_i|^2$. Equivalently, an MLE is a solution of the following quadratically constrained quadratic program (sometimes called the complex constant-modulus QP~\cite[Table\,2]{luo2010semidefinite} in the optimization literature, and non-bipartite Grothendieck problem in theoretical computer science):
\begin{align}
	\max_{x \in \CC^n} \ x^* C x, \ \textrm{ subject to } |x_1| = \cdots = |x_n| = 1,
	\tag{QP}
	\label{eq:P}
\end{align}
where $x^*$ denotes the conjugate transpose of $x$. This problem can only be solved up to a global phase, since only relative information is available. Indeed, given any solution $x$, all vectors of the form $x e^{i\theta}$ are equivalent solutions, for arbitrary phase $\theta$. %

Solving~\eqref{eq:P} is, in general, an NP-hard problem~\cite[Prop.\,3.5]{zhang2006complex}. It is thus unlikely that there exists an algorithm capable of solving~\eqref{eq:P} in polynomial time for an arbitrary cost matrix $C$. In response, building upon now classical techniques, various authors~\cite{zhang2006complex,so2007approximating,ASinger_2011_angsync,Abbe_Z2Synch,abbe2014exact,Bandeira_LittleGrothendieckOd} study the following convex relaxation of~\eqref{eq:P}. For any admissible $x$, the Hermitian matrix $X = xx^* \in \CC^{n\times n}$ is Hermitian positive semidefinite, has unit diagonal entries and is of rank one. Conversely, any such $X$ may be written in the form $X = xx^*$ such that $x$ is admissible for~\eqref{eq:P}. In this case, the cost function can be rewritten in linear form: $x^* C x = \trace{x^* C x} = \trace{CX}$. Dropping the rank constraint then yields the relaxation we set out to study:
\begin{align}
	\max_{X \in \CC^{n\times n}} \ \trace{CX}, \ \textrm{ subject to } \diag(X) = \mathds{1} \textmd{ and } X \succeq 0.
	\tag{SDP}
	\label{eq:SDP}
\end{align}
Such relaxations \emph{lift} the problem to higher dimensional spaces. Indeed, the search space of~\eqref{eq:P} has dimension $n$ (or $n-1$, discounting the global phase) whereas the search space of~\eqref{eq:SDP} has real dimension $n(n-1)$. In general, increasing the dimension of an optimization problem may not be advisable. But in this case, the relaxed problem is a semidefinite program. Such optimization problems can be solved to global optimality up to arbitrary precision in polynomial time~\cite{LVanderberghe_SBoyd_1996}.

It is known that the solution of~\eqref{eq:SDP} can be rounded to an approximate solution of~\eqref{eq:P}, with a guaranteed approximation ratio~\cite[\S\,4]{so2007approximating}. But even better, when~\eqref{eq:SDP} admits an optimal solution $X$ of rank one, then no rounding is necessary: the leading eigenvector $x$ of $X = xx^*$ is a global optimum of~\eqref{eq:P}, meaning we have solved the original problem exactly. Elucidating when the semidefinite program admits a solution of rank one, i.e., when the relaxation is \emph{tight}, is the focus of the present paper.

\subsection{A detour through synchronization over $\ZZ_2 = \{ \pm 1 \}$}

Problem~\eqref{eq:P} is posed over the complex numbers. As a result, the individual variables $x_i$ in~\eqref{eq:P} live on a continuous search space (the unit circle). One effect of this is that even small noise on the data precludes exact recovery of the signal $z$ (in general). This is the root of most of the complications that will arise in the developments hereafter. In order to first illustrate some of the ideas of the proof in a simpler context, this section proposes to take a detour through the real case. Besides this expository rationale, the real case is interesting in and of itself. It notably relates to correlation clustering~\cite{Abbe_Z2Synch} and the stochastic block model~\cite{abbe2014exact}. The analysis proposed here also appears in~\cite{bandeira2015laplacian}.

Let $z \in \{\pm 1\}^n$ be the signal to estimate and let $C = zz^{\!\top} + \sigma W$ contain the measurements, with $W = W^{\!\top}$ a Wigner matrix: its above-diagonal entries are i.i.d.\ (real) standard normal random variables, and its diagonal entries are zero. Each entry $C_{ij}$ is a noisy measurement of the relative sign $z_iz_j$. For example, $z_i$ could represent political preference of an agent (left or right wing) and $C_{ij}$ could be a measurement of agreement between two agents' views~\cite{abbe2014exact}. Consider this MLE problem:
\begin{align}
	\max_{x \in \RR^n} \ x^{\!\top} C x, \ \textrm{ subject to } |x_1| = \cdots = |x_n| = 1.
	\tag{QP$\mathbb{R}$}
	\label{eq:P_real}
\end{align}
Thus, $x_i \in \{\pm 1\}$ for each $i$. The corresponding relaxation reads:
\begin{align}
	\max_{X \in \RR^{n\times n}} \ \trace{CX}, \ \textrm{ subject to } \diag(X) = \mathds{1} \textmd{ and } X \succeq 0.
	\tag{SDP$\mathbb{R}$}
	\label{eq:SDP_real}
\end{align}
Certainly, if~\eqref{eq:SDP_real} admits a rank-one solution, then that solution reveals a global optimizer of~\eqref{eq:P_real}. Beyond rank recovery, because the variables $x_i$ are discrete, it is expected that, if noise is sufficiently small, then the global optimizer of~\eqref{eq:P_real} will be the true signal $z$.
Figure~\ref{fig:realrecovery} confirms this even for large noise, leveraging the tightness of the relaxation~\eqref{eq:SDP_real}. Note that exact recovery is a strictly stronger requirement than rank recovery; for example, for $n=2$, the relaxation is always tight~\cite{pataki1998rank}, but it is not always exact. We now investigate this exact recovery phenomenon for~\eqref{eq:SDP_real}.

\begin{figure}[h]
\centering
\begin{tikzpicture}
\draw[draw=none,shape=rectangle]
    (0,0) node {\includegraphics[width=\linewidth]{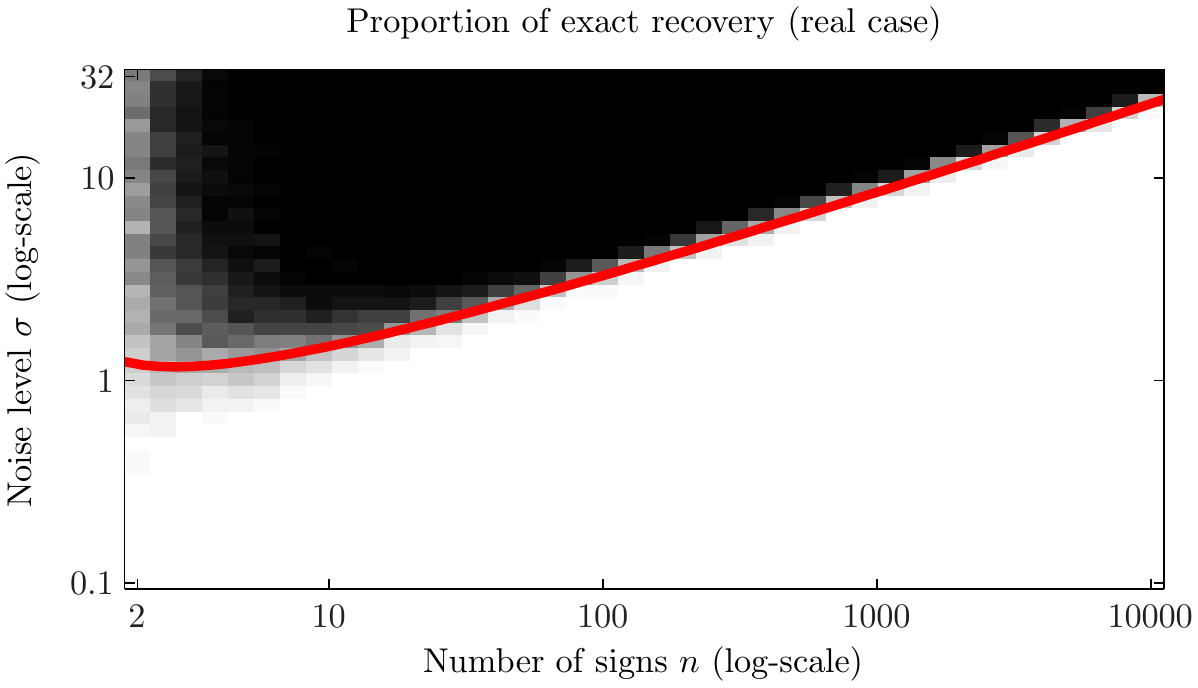}};
\draw[draw=none,shape=rectangle] (3, 1) node[mycolor1] {$\sqrt{\frac{n}{2\log{n}}}$};
\end{tikzpicture}
\vspace{-3mm}
\caption{In the real case~\eqref{eq:P_real}, one can hope to recover the signal $z \in \{\pm 1\}^n$ exactly from the pairwise sign comparisons $zz^{\!\top} + \sigma W$. This figure shows how frequently the semidefinite relaxation~\eqref{eq:SDP_real} returns the correct signal $z$ (or $-z$). For each pair $(n, \sigma)$, 100 realizations of the noise $W$ are generated independently and the dual certificate for the true signal~\eqref{eq:S_real} is verified (it is declared numerically positive semidefinite if its smallest eigenvalue exceeds $-10^{-14}n$). The frequency of success is coded by intensity (bright for 100\% success, dark for 0\% success). The results are in excellent agreement with the theoretical predictions~\eqref{eq:real_rate} (\textcolor{mycolor1}{red curve}).}
\label{fig:realrecovery}
\end{figure}

The aim is to show that $X = zz^{\!\top}$ is a solution of~\eqref{eq:SDP_real}. Semidefinite programs admit a dual problem~\cite{LVanderberghe_SBoyd_1996}, which for~\eqref{eq:SDP_real} reads:
\begin{align}
	\min_{S \in \RR^{n\times n}} \ \trace{S+C}, \textrm{ s.t. } S+C \text{ is diagonal} \textmd{ and } S \succeq 0.
	\tag{DSDP$\mathbb{R}$}
	\label{eq:SDP_dual_real}
\end{align}
Strong duality holds, which implies that a given feasible $X$ is optimal if and only if there exists a dual feasible matrix $S$ such that $\trace{CX} = \trace{S+C}$, or, in other words, such that $\trace{SX} = 0$.\footnote{Indeed, $S+C$ is diagonal and $X_{ii}=1$, hence $\trace{S+C} = \trace{(S+C)X} = \trace{SX} + \trace{CX}$.} Since both $S$ and $X$ are positive semidefinite, this is equivalent to requiring $SX = 0$ (a condition known as \emph{complementary slackness}), and hence requiring $Sz = 0$.

For ease of exposition, we now assume (without loss of generality) that $z = \mathds{1}$. Tentatively, let $S = nI + \sigma \diag(W\mathds{1}) - C$---for the complex case, we will see how to obtain this candidate without guessing. Then, by construction, $S+C$ is diagonal and $S\mathds{1} = 0$.\footnote{Using $C = \mathds{1}\mathds{1}^{\!\top} + \sigma W$, we get $S\mathds{1} = n\mathds{1} + \sigma W\mathds{1} - n\mathds{1} - \sigma W\mathds{1} = 0$.} It remains to determine under what conditions $S$ is positive semidefinite.

Define the diagonal matrix $D_W = \diag(W\mathds{1})$ and the (Laplacian-like) matrix $L_W = D_W - W$. The candidate dual certificate is a sum of two Laplacian-like matrices:
\begin{align}
	S = nI - \mathds{1}\mathds{1}^{\!\top} + \sigma L_W.
	\label{eq:S_real}
\end{align}
The first part is the Laplacian of the complete graph. It has eigenvalues $0, n, \ldots, n$, with the zero eigenvalue corresponding to the all-ones vector $\mathds{1}$. Since $L_W\mathds{1} = 0$ by construction, $S$ is positive semidefinite as long as $\sigma L_W$ does not ``destroy'' any of the large eigenvalues $n$. This is guaranteed in particular if $\opnorm{\sigma L_W} < n$. It is well-known from concentration results about Wigner matrices that, for all $\varepsilon > 0$, with high probability for large $n$, $\opnorm{L_W} \leq \sqrt{(2+\varepsilon)n \log n}$ 
(see~\cite[Thm.\,1]{Ding_RandomLaplacians} and~\cite{bandeira2015laplacian}).
Thus, it is expected that~\eqref{eq:SDP_real} will yield exact recovery of the signal $z$ (with high probability) as long as
\begin{align}
\sigma < \sqrt{\frac{n}{(2+\varepsilon)\log n}}.
\label{eq:real_rate}
\end{align}
This is indeed compatible with the empirical observation of Figure~\ref{fig:realrecovery}.

\subsection{Back to synchronization over SO(2)}
\label{sec:synchroSO2roadmap}

\begin{figure}
\centering
\begin{tikzpicture}
\draw[draw=none,shape=rectangle]
    (0,0) node {\includegraphics[width=\linewidth]{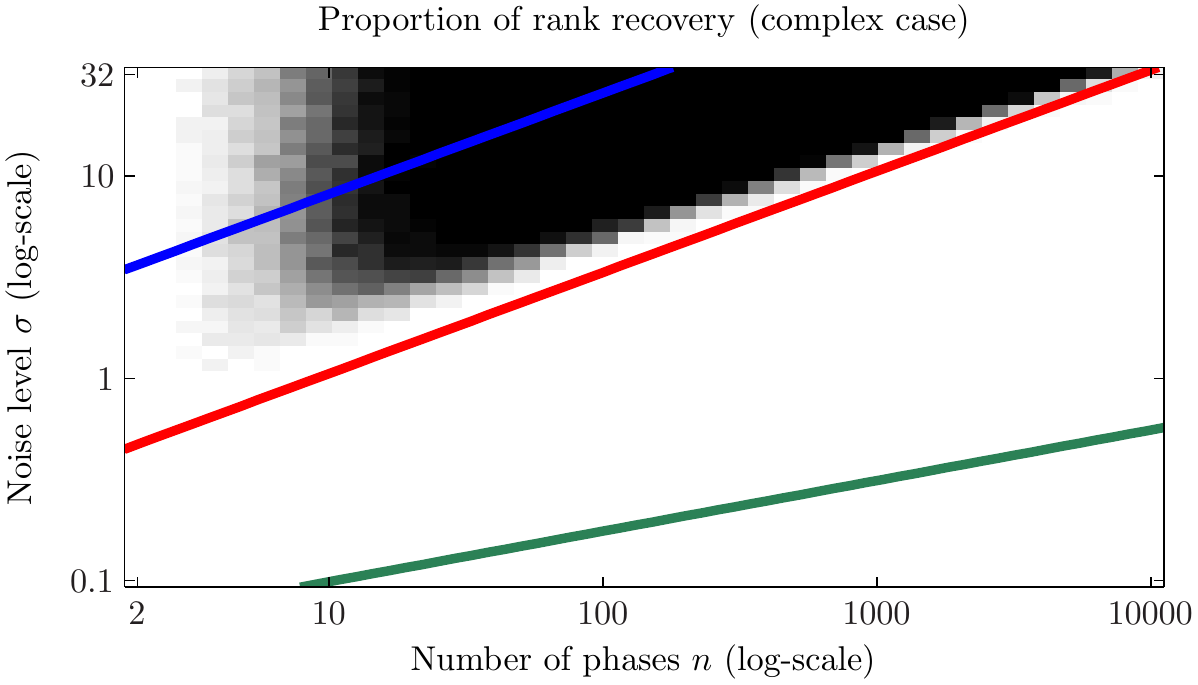}};
    \draw[draw=none,shape=rectangle] (0.1, .3)  node[mycolor1] {$\frac{1}{3}\sqrt{n}$};
    \draw[draw=none,shape=rectangle] (-5.3, 1.8)  node[mycolor2] {$\sqrt{\frac{2\pi^2}{3}}\sqrt{n}$};
    \draw[draw=none,shape=rectangle] (3.7, -1.3)  node[mycolor3] {$\frac{1}{18}n^{1/4}$};
\end{tikzpicture}
\vspace{-3mm}
\caption{In the complex case~\eqref{eq:P}, \emph{exact} recovery of the phases $z$ from the pairwise relative phase measurements $zz^* + \sigma W$ is hopeless as soon as $\sigma>0$. Nevertheless, below the \textcolor{mycolor1}{middle line}, the perturbation is smaller than the signal (in operator norm, assuming $z$-discordance), and one expects to be able to recover $z$ reasonably well.
And indeed, the maximum likelihood estimator (MLE) for $z$ is
close to $z$, as per Lemmas~\ref{lemma:Delta_L2bound} and~\ref{lem:deltainfty}. Computing the MLE is hard in general, but solving the semidefinite relaxation~\eqref{eq:SDP} is tractable. When~\eqref{eq:SDP} has a rank-one solution, that solution coincides with the MLE. This figure shows, empirically, how frequently the \eqref{eq:SDP} admits a unique rank-one solution (same color code as Figure~\ref{fig:realrecovery}). For each pair $(n, \sigma)$, 100 realizations of the noise $W$ are generated independently and \eqref{eq:SDP} is solved using a complex version of the low-rank algorithm in~\cite{boumal2015staircase}. The~\eqref{eq:SDP} appears to be tight for remarkably large levels of noise. Theorem~\ref{Thm:rr_gaussian} (our main contribution) partly explains this phenomenon, by showing that the SDP is tight with high probability below the \textcolor{mycolor3}{bottom line}.
We further note that, above the \textcolor{mycolor2}{top line}, no unbiased estimator for $z$ performs better than a random guess~\cite{BoumalManifoldSynch}.
Note that, for $n \leq 3$, a complex version of~\cite[Thm.~2.1]{pataki1998rank} guarantees deterministic tightness of the relaxation, in accordance with observation here.}
\label{fig:experiment}
\end{figure}

We now return to the complex case, which is the focus of this paper. As was mentioned earlier, in the presence of even the slightest noise, one can no longer reasonably expect the true signal $z$ to be an optimal solution of~\eqref{eq:P} (this can be further quantified using Cram\'er-Rao bounds~\cite{BoumalManifoldSynch}). Nevertheless, we set out to show that (under some assumptions on the noise) solutions of~\eqref{eq:P} are close to $z$ and they can be computed via~\eqref{eq:SDP}.

The proof follows that of the real case in spirit, but requires more sophisticated arguments because the solution is no longer known explicitly. This is important because the candidate dual certificate $S$ itself depends on that solution.
With this in mind, the proof of the upcoming main lemma (Lemma~\ref{lemma:main}) follows this reasoning:
\begin{enumerate}
\item For small enough noise levels $\sigma$, any optimal solution $x$ of~\eqref{eq:P} is close to the sought signal $z$ (Lemmas~\ref{lemma:Delta_L2bound} and~\ref{lem:deltainfty}).
\item Solutions $x$, a fortiori, satisfy necessary optimality conditions for~\eqref{eq:P}. First-order conditions take up the form $Sx = 0$, where $S = \Re\{\ddiag(Cx x^*)\} - C$ depends smoothly on $x$ (see~\eqref{eq:S}).
Second-order conditions will also be used.
\item Remarkably, this $S$ can be used as a dual certificate for solutions of~\eqref{eq:SDP}. Indeed, $X = xx^*$ is optimal if and only if $S$ is positive semidefinite (Lemma~\ref{lem:uniqueS}). The solution is unique if $\rank(S) = n-1$ (Lemma~\ref{lem:KKTSDP}). Thus, it only remains to study the eigenvalues of $S$.
\item In the absence of noise, $S$ is a Laplacian for a complete graph with unit weights (up to a unitary transformation), so that its eigenvalues are~$0$ with multiplicity~$1$, and~$n$ with multiplicity $n-1$. Then, $X = zz^*$ is always the unique solution.
\item Adding small noise, because of the first point, the solution $x$ will move only by a small amount, and hence so will $S$. Thus, the large eigenvalues should be controllable into remaining positive (Section~\ref{sec:controlling_large_eigs}).
\item The crucial fact follows: because of the way $S$ is constructed (using first-order optimality conditions), the zero eigenvalue is ``pinned down'' (as long as $x$ is a local optimum of \eqref{eq:P}). Indeed, both $x$ and $S$ change as a result of adding noise, but the property $Sx = 0$ remains valid. Thus, there is no risk that the zero eigenvalue from the noiseless scenario would become negative when noise is added.
\end{enumerate}
Following this road map, most of the work in the proof below consists in bounding how far away $x$ can be from $z$, and in using that to control the large eigenvalues of $S$.
This constructive way of identifying the dual certificate (third point in the roadmap) already appears explicitly in work by Journ\'ee et al.~\cite{journee2010low}, who considered a different family of real, semidefinite programs which also admit a smooth geometry when the rank is constrained. This points to smoothness of~\eqref{eq:P} (and non-degeneracy of~\eqref{eq:SDP}~\cite{alizadeh1997complementarity}) as a principal ingredient in our analysis: the KKT conditions of~\eqref{eq:P} are a subset of the KKT conditions of its relaxation. It is because the former are explicit (rather than existential as in Lemma~\ref{lem:KKTSDP}) that they help in identifying $S$.

Our main theorem follows. In a nutshell, it guarantees that: under (complex) Wigner noise $W$, with high probability, solutions of~\eqref{eq:P} are close to $z$, and, assuming the noise level $\sigma$ is smaller than (on the order of) $n^{1/4}$, \eqref{eq:SDP} admits a unique solution, it is of rank one and identifies the solution of~\eqref{eq:P} (unique, up to a global phase shift).

\begin{theorem}\label{Thm:rr_gaussian}
Let $z\in\CC^{n}$ be a vector with unit modulus entries, $W\in\CC^{n\times n}$ a Hermitian Gaussian Wigner matrix and let $C = zz^\ast + \sigma W$. Let $x \in \CC^n$ be a global optimizer of~\eqref{eq:P} satisfying $z^*x = |z^*x|$ (fixing the global phase). With probability at least $1 - \mathcal{O}(n^{-5/4})$, $x$ is close to $z$ in the following two senses:
\begin{align}
\|x-z\|_\infty & \leq 6\left( \sqrt{\log n} + 29\sigma \right) \sigma n^{-1/2}, \textrm{ and}  \nonumber\\
\|x-z\|_2 \  & \leq 12\sigma \nonumber.
\end{align}
Furthermore, if
\begin{align}\label{main_sigma_condition_2}
 \sigma \leq \frac{1}{18} n^{1/4},
\end{align}
then the semidefinite program~\eqref{eq:SDP}, given by
\[
\max_{X \in \CC^{n\times n}} \ \trace{CX}, \ \textrm{ subject to } \diag(X) = \mathds{1} \textmd{ and } X \succeq 0,
\]
has, as its unique solution, the rank-one matrix $X = xx^\ast$.
\end{theorem}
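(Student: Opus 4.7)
The plan is to follow the roadmap of Section~\ref{sec:synchroSO2roadmap}. The first step is to establish the two proximity estimates for any global optimizer $x$ of~\eqref{eq:P} after fixing the global phase (these should be Lemmas~\ref{lemma:Delta_L2bound} and~\ref{lem:deltainfty} of the paper). The argument is to exploit the defining inequality $x^{*}Cx \geq z^{*}Cz$, expand $C = zz^{*} + \sigma W$ and $\Delta = x-z$, use the unit-modulus constraints $|x_i|=|z_i|=1$ to replace $\Re\{z^{*}\Delta\}$ by $-\tfrac12\|\Delta\|_2^2$, and bound the noise-dependent terms using $\opnorm{W} \lesssim \sqrt{n}$ for the $\ell_2$ bound and an entrywise control of $Wx$ via Gaussian concentration for the $\ell_\infty$ bound. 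These two high-probability events are the ones on which the rest of the proof is conditioned; a union bound produces the stated $1 - \mathcal{O}(n^{-5/4})$ success probability.

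The second step is to convert tightness of~\eqref{eq:SDP} into a purely spectral question about a single Hermitian matrix $S$. The first-order necessary optimality conditions for~\eqref{eq:P} at $x$ force $Sx = 0$, where $S = \Re\{\ddiag(Cxx^{*})\} - C$, because the constraints $|x_i|=1$ only permit radial Lagrange multipliers at each coordinate. Lemmas~\ref{lem:uniqueS} and~\ref{lem:KKTSDP} then promote this same $S$ to a KKT certificate for~\eqref{eq:SDP}: dual feasibility is automatic from the definition, complementary slackness $S(xx^{*}) = 0$ follows from $Sx=0$, so $X=xx^{*}$ is optimal for~\eqref{eq:SDP} iff $S \succeq 0$, and it is the \emph{unique} primal optimum iff in addition $\rank(S) = n-1$. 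The task thus reduces to controlling the eigenvalues of $S$ on the hyperplane $x^{\perp}$.

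The third and most delicate step is to verify $\lambda_{\min}(S|_{x^\perp}) > 0$ under $\sigma \leq \tfrac{1}{18} n^{1/4}$. The strategy is to compare $S$ to the noiseless certificate $S_0$ obtained by setting $x=z$ and $W=0$: a direct computation gives $S_0 = nI - zz^{*}$, which has spectrum $\{0, n, \ldots, n\}$ with kernel $\mathrm{span}(z)$. Even though the kernel of $S$ is aligned with $x$ rather than with $z$, these vectors are close by step one, so $S \succeq 0$ follows from $\opnorm{S - S_0} < n$, and the pinning $Sx=0$ then forces the rank to be exactly $n-1$. The difference $S - S_0$ splits into an off-diagonal noise term $-\sigma W$ of operator norm $\lesssim \sigma\sqrt{n}$, a low-rank contribution involving $zz^{*} - xx^{*}$ whose norm is controlled by $\|x-z\|_2 \leq 12\sigma$, and a diagonal contribution $\Re\{\ddiag(Cxx^{*})\} - nI$ whose entries combine $\sigma(Wx)_i\overline{x_i}$ with $\Re\{z_i\overline{x_i}\}-1$ and are therefore bounded by the $\ell_\infty$ estimate of step one. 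Plugging in $\sigma \leq \tfrac{1}{18} n^{1/4}$ then makes these three contributions collectively strictly smaller than $n$.

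The main obstacle is that $x$ is never available in closed form, so every appearance of $x$ inside $S$ must be traded for $z$ plus a noise-dependent perturbation, and these trades must survive all the way up to the threshold $\sigma \sim n^{1/4}$ rather than the $\sigma \sim \sqrt{n/\log n}$ of the real case. This is why both the $\ell_2$ and the $\ell_\infty$ proximity estimates of step one are needed simultaneously, and (per the roadmap) why the second-order necessary optimality conditions for~\eqref{eq:P} must be invoked, not just the first-order ones: without them one cannot rule out small negative eigenvalues of $S$ drifting across zero under the perturbation while still being consistent with stationarity of $x$.
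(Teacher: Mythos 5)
Your overall architecture matches the paper's: proximity estimates from $x^*Cx \geq z^*Cz$, the certificate $S = \Re\{\ddiag(Cxx^*)\} - C$ with $Sx=0$ from the first-order (Riemannian/KKT) conditions, Lemmas~\ref{lem:KKTSDP} and~\ref{lem:uniqueS} reducing everything to $S\succeq 0$ with $\rank(S)=n-1$, and a spectral perturbation argument (your Weyl-plus-pinning comparison with $S_0 = nI - zz^*$ is a legitimate repackaging of the paper's direct lower bound on $u^*Su$ for $u\perp x$). However, there is one genuine gap, and it sits exactly at the crux of the paper: you propose to control $(Wx)_i$ (hence $\|Wx\|_\infty$) ``via Gaussian concentration,'' both for the $\ell_\infty$ proximity estimate and for the diagonal term of $S - S_0$. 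This is not valid, because $x$ is the MLE and therefore a complicated function of $W$; the entries of $Wx$ are not Gaussian and no direct concentration bound applies. This is precisely the obstruction discussed in Remark~\ref{rem:Wxinfty}, and it is why the theorem's rate is $\sigma \lesssim n^{1/4}$ rather than $\sqrt{n}/\polylog(n)$: if entrywise concentration on $Wx$ were available, one would get the stronger rate observed empirically. The paper's workaround is the deterministic bound $\|Wx\|_\infty \leq \opnorm{W}\,\|x-z\|_2 + \|Wz\|_\infty$~\eqref{eq:Wxinftybound}, where concentration is applied only to $\|Wz\|_\infty$ (with $z$ deterministic) and the first term is absorbed using Lemma~\ref{lemma:Delta_L2bound}. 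Your write-up needs this step (or an equivalent decoupling device); without it both your $\ell_\infty$ estimate and your bound on the diagonal of $S - S_0$ are unjustified. Relatedly, your $\ell_\infty$ bound cannot follow from the single global inequality $x^*Cx \geq z^*Cz$ alone: the paper derives it from a coordinate-wise variational argument (replacing $x_i$ by $z_i$ and using optimality of $x$), which is an additional idea you should make explicit.

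A secondary point concerns how the probability is accounted for. You propose to condition on ``the two high-probability proximity events,'' but those events involve $x$, which depends on $W$; to make the union bound clean, the random events must be expressed purely in terms of $(W,z)$. This is exactly what the paper's notion of $z$-discordance (Definition~\ref{def:discordant}, with $\opnorm{W}\leq 3\sqrt{n}$ and $\|Wz\|_\infty \leq 3\sqrt{n\log n}$) accomplishes: all statements about $x$ are then deterministic consequences of these two events, established in Proposition~\ref{prop:WignerDiscordant} with probability $1 - 2n^{-5/4} - e^{-n/2}$. Finally, a small correction on your justification for the second-order conditions: they are not needed to ``pin'' the zero eigenvalue (that is done by $Sx=0$, a first-order fact); in the paper they serve to show $(Cx)_i\bar{x}_i = |(Cx)_i| \geq 0$, which lets the spectral bound go through using only the $\ell_2$ proximity estimate. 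In your Weyl-style variant one could instead bound $\Re\{(z^*x)z_i\bar{x}_i\}$ below via the $\ell_\infty$ estimate, but either way the quantitative bottleneck remains the $\sigma\|Wx\|_\infty$ term, which must be handled as above.
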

The numerical experiments (Figure~\ref{fig:experiment}) suggest it should be possible to allow $\sigma$ to grow at a rate of $\frac{n^{1/2}}{\polylog(n)}$ (as in the real case), but we were not able to establish that (see Remark~\ref{rem:Wxinfty}). Nevertheless, we do show that $\sigma$ can grow unbounded with $n$. To the best of our knowledge, this is the first result of this kind. We hope it might inspire similar results in other problems where the same phenomenon has been observed~\cite{bandeira2014open}.

%
%
%
%


\section{Main result} \label{sec:mainres}

In this section we present our main technical result and show how it can be used to prove Theorem~\ref{Thm:rr_gaussian}. We begin with a central definition in this paper. Intuitively, this definition characterizes non-adversarial noise matrices $W$.\footnote{A similar but different definition appeared in a previous version of this paper.}
\begin{definition}[$z$-discordant matrix]\label{def:discordant}
Let $z\in\CC^{n}$ be a vector with unit modulus entries. A matrix $W\in\CC^{n\times n}$ is called \emph{$z$-discordant} if it is Hermitian and satisfies both
\begin{align*}
	\opnorm{W} & \leq 3 \sqrt{n}, & \textrm{ and } & & \|Wz\|_\infty & \leq 3\sqrt{n \log n}.
\end{align*}
\end{definition}
The next lemma is the main technical contribution of this paper. It is a deterministic, non-asymptotic statement.
\begin{lemma}\label{lemma:main}
Let $z\in\CC^{n}$ be a vector with unit modulus entries, let $W\in\CC^{n\times n}$ be a Hermitian, $z$-discordant matrix (Definition~\ref{def:discordant}), and let $C = zz^\ast + \sigma W$. Let $x \in \CC^n$ be a global optimizer of~\eqref{eq:P} satisfying $z^*x = |z^*x|$. $x$ is close to $z$ in the following two senses:
\begin{align}
\|x-z\|_\infty & \leq 6\left( \sqrt{\log n} + 29\sigma \right) \sigma n^{-1/2}, \textrm{ and}  \nonumber\\
\|x-z\|_2 \  & \leq 12\sigma \nonumber.
\end{align}
Furthermore, if $\sigma \leq \frac{1}{18} n^{1/4}$,
then the semidefinite program~\eqref{eq:SDP} has, as its unique solution, the rank-one matrix $X = xx^\ast$.

\end{lemma}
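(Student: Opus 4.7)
My plan is to carry out the three assertions in sequence, since each uses the previous.

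\textbf{Step 1 ($\ell_2$ bound).} The starting point is the comparison $x^\ast C x \geq z^\ast C z$, coming from global optimality of $x$. Substituting $C = zz^\ast + \sigma W$ produces $|z^\ast x|^2 - n^2 \geq \sigma(z^\ast W z - x^\ast W x)$. The phase-fixing convention $z^\ast x = |z^\ast x|$ combined with the unit-modulus constraints gives the convenient identity $|z^\ast x| = n - \tfrac{1}{2}\|x - z\|_2^2$, which turns the left-hand side into a (negative, quartic in $\|x-z\|_2$) quantity. Bounding the right-hand side using only the first clause of $z$-discordance, $\opnorm{W} \leq 3\sqrt n$, gives a polynomial inequality in $\|x-z\|_2$ that pins it down to $12\sigma$.

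\textbf{Step 2 ($\ell_\infty$ bound).} Next I invoke the first-order necessary condition for~\eqref{eq:P}: at any local optimum $(Cx)_i = s_i x_i$ for real multipliers $s_i = \Re(\overline{x_i}(Cx)_i)$. Writing out $C = zz^\ast + \sigma W$ gives the per-coordinate identity $s_i x_i = (z^\ast x)\,z_i + \sigma (Wx)_i$, which I rearrange to express $x_i - z_i$ explicitly in terms of $(Wx)_i$, $z^\ast x$, and $s_i$. I bound $|(Wx)_i| \leq |(Wz)_i| + |(W(x-z))_i|$ using the second discordance hypothesis $\|Wz\|_\infty \leq 3\sqrt{n\log n}$ and the $\ell_2$ bound on $x-z$ from Step~1 (multiplied by $\opnorm{W}$). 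A crude lower bound $s_i \gtrsim n$, which follows from the $\ell_2$ bound together, if needed, with the second-order necessary condition, closes the loop and yields the displayed entrywise estimate.

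\textbf{Step 3 (SDP tightness).} Following the roadmap of Section~\ref{sec:synchroSO2roadmap}, I use $S = \Re\{\ddiag(Cxx^\ast)\} - C = \diag(s) - C$ as the dual certificate. By Lemmas~\ref{lem:uniqueS} and~\ref{lem:KKTSDP}, $X = xx^\ast$ is the unique solution of~\eqref{eq:SDP} as soon as $S \succeq 0$ with $\rank(S) = n-1$, and $Sx = 0$ already holds by the first-order condition used in Step~2. It therefore suffices to show $w^\ast S w > 0$ for every nonzero $w \perp x$. I decompose $S = S_0 + E$ with $S_0 = nI - zz^\ast$ the noiseless certificate and $E = (\diag(s) - nI) - \sigma W$. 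For $w \perp x$, the identity $w^\ast S_0 w = n\|w\|_2^2 - |z^\ast w|^2$, combined with the inequality $|z^\ast w|^2 \leq (n - |x^\ast z|^2/n)\,\|w\|_2^2$ (valid because $w \perp x$, by Cauchy--Schwarz applied to the projection of $z$ onto $x^\perp$), gives $w^\ast S w \geq \bigl(|x^\ast z|^2/n - \opnorm{E}\bigr)\|w\|_2^2$. Using $|x^\ast z| \geq n - 72\sigma^2$ from Step~1, it remains to bound $\opnorm{E}$ and arrange the constants.

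\textbf{Main obstacle.} The delicate part is the sharp bound on $\opnorm{E}$, which reduces to controlling $\max_i |s_i - n|$. Writing $s_i = (z^\ast x)\,\Re(\overline{x_i} z_i) + \sigma\,\Re(\overline{x_i}(Wx)_i)$, the first summand is controlled by the $\ell_2$ and $\ell_\infty$ bounds of Steps~1--2, but the second requires both clauses of $z$-discordance and the Step~1 bound on $\|x-z\|_2$ applied inside the split $(Wx)_i = (Wz)_i + (W(x-z))_i$. Tracking the constants through this perturbation, together with the contribution $\sigma\opnorm{W} \leq 3\sigma\sqrt n$ from the second piece of $E$, is what fixes the threshold at $\sigma \leq n^{1/4}/18$; it is also the reason the argument does not, without new ideas, reach the conjectured rate $\sqrt{n}/\polylog(n)$ suggested by Figure~\ref{fig:experiment}.
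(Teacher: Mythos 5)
Your plan follows essentially the same route as the paper's proof: the same $\ell_2$ comparison argument, the same certificate $S = \Re\{\ddiag(Cxx^*)\} - C$ read off from first-order stationarity, the same appeal to second-order criticality, and the same reduction of the spectral bound (on the orthogonal complement of $x$) to the quantities $|z^*x|$, $\|x-z\|_2$, $\opnorm{W}$ and $\|Wx\|_\infty$, with $\|Wx\|_\infty \leq \opnorm{W}\|x-z\|_2 + \|Wz\|_\infty$. Your Step 3 packaging $S = (nI - zz^*) + E$, $E = (\diag(s) - nI) - \sigma W$, together with the projection inequality $|z^*w|^2 \leq (n - |x^*z|^2/n)\|w\|_2^2$ for $w \perp x$, is a rearrangement of the paper's direct bound on $u^*Su$ for $u \perp x$ (where the paper uses $|u^*z| = |u^*(z-x)| \leq \|u\|_2\|x-z\|_2$), and it leads to the same sufficient condition and threshold.

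Two local choices, as written, are lossier than needed and would not quite deliver the stated constants. In Step 2, solving $(Cx)_i = s_i x_i$ for $x_i - z_i$ and invoking ``$s_i \gtrsim n$'' is shaky: the crude bound $s_i \geq |z^*x| - \sigma|(Wx)_i|$ only gives $s_i \gtrsim n$ when $\sigma \lesssim n^{1/4}$ (not assumed for the distance bounds), and even then it yields roughly $12\bigl(\sqrt{\log n} + 12\sigma\bigr)\sigma n^{-1/2}$ rather than $6\bigl(\sqrt{\log n} + 29\sigma\bigr)\sigma n^{-1/2}$. The clean fix, which is the paper's move and uses only the second-order fact $s_i \geq 0$, is $\inner{z_i - x_i}{(Cx)_i} = s_i\bigl(\Re\{\overline{x_i} z_i\} - 1\bigr) \leq 0$, which gives $|z^*x|\,|x_i - z_i|^2 \leq 2\sigma |x_i - z_i|\,|(Wx)_i|$ with no lower bound on $s_i$ required. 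Similarly, in Step 3 you propose to control $s_i - n$ through the decomposition $s_i = (z^*x)\Re\{\overline{x_i}z_i\} + \sigma\Re\{\overline{x_i}(Wx)_i\}$; the first summand then injects an extra term of order $n\|x-z\|_\infty^2/2$ (about $36\cdot 841\,\sigma^4$, i.e.\ $\approx 0.29\,n$ at $\sigma = n^{1/4}/18$) into the sufficient condition, which eats most of the slack and makes the stated constant $1/18$ delicate, especially for small $n$. Using instead $s_i = |(Cx)_i| = |(z^*x)z_i + \sigma(Wx)_i|$ (again legitimate since $s_i \geq 0$) gives $|s_i - n| \leq (n - |z^*x|) + \sigma\|Wx\|_\infty \leq 72\sigma^2 + \sigma\|Wx\|_\infty$, and your $\opnorm{E}$ bound then reproduces exactly the paper's condition $n > \sigma\bigl(216\sigma + 3\sqrt{n} + \|Wx\|_\infty\bigr)$, for which $\sigma \leq \frac{1}{18}n^{1/4}$ suffices.
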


We defer the proof of Lemma~\ref{lemma:main} to Section~\ref{section:proof}. The following proposition, whose proof we defer to Appendix~\ref{apdx:WignerDiscordant}, shows how this lemma can be used to prove Theorem~\ref{Thm:rr_gaussian}.

\begin{proposition}\label{prop:WignerDiscordant}
Let $z\in\CC^n$ be a (deterministic) vector with unit modulus entries. Let $W \in \CC^{n\times n}$ be a Hermitian, random matrix
with i.i.d.\ off-diagonal entries following a standard complex normal distribution and zeros on the diagonal.
Thus, $W_{ii} = 0$, $W_{ij} = \overline{W_{ji}}$, $\EE W_{ij} = 0$ and $\EE |W_{ij}|^2 = 1$ (for $i \neq j$). Then, $W$ is $z$-discordant with probability at least $1 - 2n^{-5/4} - e^{-n/2}$.
\end{proposition}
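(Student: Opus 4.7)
My plan is to handle the two conditions defining $z$-discordance separately and then apply a union bound. Writing $A = \{ \opnorm{W} \leq 3\sqrt{n} \}$ and $B = \{ \|Wz\|_\infty \leq 3\sqrt{n\log n} \}$, it suffices to establish $\Pr[A^c] \leq e^{-n/2}$ and $\Pr[B^c] \leq 2 n^{-5/4}$; combining these via $\Pr[A\cap B] \geq 1 - \Pr[A^c] - \Pr[B^c]$ produces the advertised bound $1 - 2n^{-5/4} - e^{-n/2}$.

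For the operator norm (event $A$), I would invoke standard non-asymptotic concentration for complex Gaussian Hermitian matrices. The semicircle law, together with non-asymptotic refinements such as those of Bandeira--van Handel, yields $\EE \opnorm{W} \leq 2\sqrt{n} + o(\sqrt{n})$. Viewed as a function of the independent real and imaginary parts of the upper-triangular entries of $W$, the map $W \mapsto \opnorm{W}$ is $1$-Lipschitz in the Frobenius norm, so Gaussian concentration of measure gives $\Pr[\opnorm{W} \geq \EE\opnorm{W} + t] \leq e^{-t^2/2}$. Choosing $t$ of order $\sqrt{n}$ just large enough that $\EE\opnorm{W} + t \leq 3\sqrt{n}$ for all sufficiently large $n$ (and checking small $n$ by direct computation) yields the target tail $e^{-n/2}$.

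For the infinity norm (event $B$), I would look at each coordinate $(Wz)_i = \sum_{j \neq i} W_{ij} z_j$ individually. Since the above-diagonal entries $W_{jk}$ with $j < k$ are i.i.d.\ standard complex normal and the complex normal law is invariant under conjugation, the entries $W_{ij} = \overline{W_{ji}}$ for $j < i$ are again standard complex normal and jointly independent of the above-diagonal entries indexed by different unordered pairs. Thus the $n-1$ summands are independent complex Gaussians each of variance $|z_j|^2 = 1$, so $(Wz)_i$ is complex normal with variance $n-1$, and $|(Wz)_i|^2/(n-1) \sim \Exp(1)$. The pointwise tail bound $\Pr[|(Wz)_i| \geq 3\sqrt{n\log n}] \leq \exp(-9 n \log n/(n-1)) \leq n^{-9}$, followed by a union bound over the $n$ coordinates, comfortably beats the required $2 n^{-5/4}$.

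The main obstacle, and the only step not reducing to a one-line calculation, is step one: locating (or rederiving) a clean non-asymptotic inequality $\EE \opnorm{W} \leq 2\sqrt{n} + O(1)$ for the complex Gaussian Hermitian ensemble, with constants sharp enough that the deviation budget of size $\sqrt{n}$ still fits inside $3\sqrt{n}$. Both the expectation bound and the Gaussian concentration step are classical, so the real difficulty is bookkeeping of constants and choice of reference rather than novel mathematical content; the vector bound in step two is essentially a textbook complex Gaussian tail computation.
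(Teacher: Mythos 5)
Your overall structure is exactly the paper's: a union bound over the two events, an expectation bound plus Gaussian concentration for $\opnorm{W}$, and a coordinate-wise complex Gaussian tail plus a union bound for $\|Wz\|_\infty$. Your treatment of $\|Wz\|_\infty$ is correct and in fact sharper than needed: the paper only uses the crude bound $2k e^{-t^2/4}$ for the maximum modulus of $k$ (not necessarily independent) standard complex Gaussians, landing exactly on $2n^{-5/4}$, whereas your exact $\Exp(1)$ law for $|(Wz)_i|^2/(n-1)$ gives roughly $n^{-8}$. The paper also first reduces to $z = \mathds{1}$ via the rotation invariance $\diag(z)^* W \diag(z) \sim W$, while you absorb the phases coordinate-wise; these are equivalent.

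The one loose end is precisely the one you flag: an expectation bound of the form $\EE\opnorm{W} \leq 2\sqrt{n} + o(\sqrt{n})$ is not sufficient as stated. To obtain the advertised tail $e^{-n/2}$ from the Frobenius-Lipschitz concentration inequality you must spend $t = \sqrt{n}$, which forces $\EE\opnorm{W} \leq 2\sqrt{n}$ with no additive slack; any extra constant degrades the tail to $e^{-(\sqrt{n}-c)^2/2}$ or requires separate treatment of moderate $n$. The paper closes exactly this point with a short self-contained argument: it applies Slepian's comparison theorem to the process $X_v = v^* W v$, bounding its increments by $\sqrt{2}\,\|v-u\|$ (using that $\EE W_{ij}^2 = 0$ for complex Gaussian entries), which yields $\EE\, \lambda_{\max}(W) \leq \sqrt{2}\, \EE \max_{\|\tilde v\| = 1} \tilde v^{\top} g \leq 2\sqrt{n}$ exactly; the same concentration step you invoke then gives $\Pr\left\{ \opnorm{W} > 3\sqrt{n} \right\} \leq e^{-n/2}$. (Both you and the paper gloss over the smallest eigenvalue; by symmetry of $-W$ this costs at most a factor $2$ on the $e^{-n/2}$ term.) So: same route, with Slepian's lemma being the specific ingredient that makes the constant bookkeeping in your step one come out clean.
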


The latter result is not surprising. Indeed, the definition of $z$-dis\-cor\-dance requires two elements. Namely, (i) that $W$ be not too large as an operator, and (ii) that no row of $W$ be too aligned with $z$. For $W$ a Wigner matrix independent of $z$, those are indeed expected to hold. In fact, for Gaussian noise, the constants 3 can be replaced by $2+\varepsilon$ for any $\varepsilon > 0$, provided $n$ is large enough.

The definition of $z$-discordance is not tightly adjusted to Wigner noise. As a result, it is expected that Lemma~\ref{lemma:main} will be applicable to show tightness of semidefinite relaxations for a larger span of noise models.


\section{The proof}\label{section:proof}

In this section, we prove Lemma~\ref{lemma:main}. See Section~\ref{sec:synchroSO2roadmap} for an outline of the proof.

\subsection{Global optimizers of~\eqref{eq:P} are close to $z$}

This subsection focuses on bounding the maximum likelihood estimation error, in $\ell_2$ and $\ell_\infty$ norms. Later on, only the $\ell_2$ bound is used to establish tightness of the SDP.

\begin{lemma}\label{lemma:Delta_L2bound}
If $W$ is $z$-discordant and $x\in\mathbb{C}^n$ verifies $\|x\|_2^2 = n$ and $z^*Cz \leq x^*Cx$ (in particular, if $x$ is a global optimizer of~\eqref{eq:P}), then
\begin{align*}
	\min_{\theta \in \mathbb{R}}\|xe^{i \theta}-z\|_2^2 & = 2(n - |z^*x|) \leq 144 \sigma^2.
\end{align*}
\end{lemma}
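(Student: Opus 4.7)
Without loss of generality I may absorb a global phase into $x$ so that $z^*x = |z^*x| \geq 0$: the map $x \mapsto xe^{i\theta}$ preserves both $\|x\|_2^2 = n$ and the quantity $x^*Cx$, so all hypotheses are preserved. Expanding $\|xe^{i\theta}-z\|_2^2 = 2n - 2\Re(e^{i\theta}z^*x)$ and minimizing over $\theta$ immediately yields the first equality $\min_\theta\|xe^{i\theta}-z\|_2^2 = 2(n-|z^*x|)$. What remains is the inequality, i.e., to show $r^2 \leq 144\sigma^2$ where $r := \sqrt{2(n-|z^*x|)} = \|x-z\|_2$ under the phase normalization above.

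Next, I would substitute $C = zz^* + \sigma W$ into the suboptimality condition $z^*Cz \leq x^*Cx$. Using $\|x\|_2 = \|z\|_2 = \sqrt{n}$, the deterministic $zz^*$ part contributes $n^2$ on the left and $|z^*x|^2$ on the right, so
\[
n^2 - |z^*x|^2 \;\leq\; \sigma\bigl(x^*Wx - z^*Wz\bigr) \;=\; \sigma\,\tr(WY),
\]
where $Y := xx^* - zz^*$ is Hermitian of rank at most $2$.

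The crux is to bound $|\tr(WY)|$ by exploiting the low rank of $Y$. I would invoke Schatten--H\"older, $|\tr(WY)| \leq \opnorm{W}\cdot\|Y\|_*$, together with the Cauchy--Schwarz bound $\|Y\|_* \leq \sqrt{2}\,\|Y\|_F$ valid for Hermitian $Y$ with $\rank(Y) \leq 2$. A direct computation using $\|x\|_2^2 = \|z\|_2^2 = n$ and $\tr(xx^*zz^*)=|z^*x|^2$ gives $\|Y\|_F^2 = 2(n^2-|z^*x|^2) = r^2(n+|z^*x|)$. With $\opnorm{W} \leq 3\sqrt{n}$ from $z$-discordance and the crude bound $n+|z^*x| \leq 2n$, the chain becomes
\[
\tfrac{r^2}{2}(n+|z^*x|) \;=\; n^2 - |z^*x|^2 \;\leq\; \sigma\cdot 3\sqrt{n}\cdot\sqrt{2}\cdot r\sqrt{2n} \;=\; 6\sigma n r.
\]
Using $|z^*x| \geq 0$ to drop the extra term on the left and then dividing by $nr/2$ (the case $r=0$ is trivial) yields $r \leq 12\sigma$, hence $r^2 \leq 144\sigma^2$, as claimed.

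The main obstacle, in my view, is spotting that $Y=xx^*-zz^*$ should be treated as a single rank-$2$ object and bounded via the nuclear norm. The more tempting route of writing $x^*Wx - z^*Wz = 2\Re(z^*W\Delta) + \Delta^*W\Delta$ with $\Delta = x-z$ and bounding each piece separately, using $|z^*W\Delta| \leq \|Wz\|_2\|\Delta\|_2 \leq 3nr$ and $|\Delta^*W\Delta| \leq \opnorm{W}r^2 \leq 3\sqrt{n}\,r^2$, leaves a stubborn cross term $3\sigma\sqrt{n}\,r^2$ that only yields $r \leq 12\sigma/(1-6\sigma/\sqrt{n})$, strictly weaker than the lemma's claim. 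The rank-$2$ trick removes this spurious correction in one stroke.
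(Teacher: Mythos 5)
Your proof is correct, and its overall skeleton (phase alignment, expanding $z^*Cz\le x^*Cx$ to get $n^2-|z^*x|^2\le\sigma(x^*Wx-z^*Wz)$, then dropping $|z^*x|$ against $n$) matches the paper; the one step you do differently is the bound on the noise term. You treat $Y=xx^*-zz^*$ as a rank-$2$ matrix and chain Schatten--H\"older with $\|Y\|_*\le\sqrt2\,\|Y\|_F$ and the identity $\|Y\|_F^2=2(n^2-|z^*x|^2)$, whereas the paper simply factors the difference of quadratics as $x^*Wx-z^*Wz=\Re\{(x-z)^*W(x+z)\}$ and applies Cauchy--Schwarz with the operator norm, giving $\le\opnorm{W}\,\|x-z\|_2\,\|x+z\|_2\le 6n\|x-z\|_2$. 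These two bounds are in fact numerically identical (since $\|x+z\|_2=\sqrt{2(n+|z^*x|)}=\tfrac{1}{\,r\,}\sqrt{2}\,\|Y\|_F\cdot r/\sqrt{\,n+|z^*x|\,}\cdot\sqrt{n+|z^*x|}$, i.e.\ $\sqrt2\,\|Y\|_F=\|x-z\|_2\|x+z\|_2$ up to the same crude step $n+|z^*x|\le 2n$), so your route buys nothing extra but also loses nothing; it is just heavier machinery for the same estimate. One remark in your closing paragraph deserves correction: the rank-$2$/nuclear-norm trick is not needed to avoid the ``stubborn cross term.'' The cross term only appears if you expand around $z$ asymmetrically ($x=z+\Delta$); the paper's symmetric difference-of-squares factorization $\Re\{(x-z)^*W(x+z)\}$ is the elementary way to remove it, and it yields exactly your constant $12$.
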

\begin{proof}
The $\ell_2$ error and the correlation between $z$ and $x$ are related as follows:
\begin{align}
	\min_{\theta \in \mathbb{R}}\|xe^{i \theta}-z\|_2^2 & = 2\left(n - \max_{\theta\in\mathbb{R}} \Re\{e^{i\theta} z^*x\} \right) = 2(n - |z^*x|).
	\label{eq:l2correlation}
\end{align}
Without loss of generality, assume the global phase of $x$ is such that $z^*x = |z^*x|$, i.e., $x$ and $z$ are optimally aligned.
Expand the inequality $z^* C z \leq x^* C x$ using the data model $C = zz^* + \sigma W$ to obtain
$$
	n^2 + \sigma z^*Wz \leq |z^*x|^2 + \sigma x^* W x.
$$
Group noise terms as follows:
\begin{align*}
	n^2 - |z^*x|^2 \leq \sigma \left( x^* W x - z^* W z \right).
\end{align*}
Since $n^2 - |z^*x|^2 = (n - |z^*x|)(n + |z^*x|)$, divide both sides by $n + |z^*x| \geq n$ to obtain
\begin{align}
	n - |z^*x| \leq \sigma \left( x^* W x - z^* W z \right)n^{-1}.
	\label{eq:l2error_intermediate}
\end{align}
The difference of quadratic terms should be small if $x$ and $z$ are close. Indeed, using $\opnorm{W} \leq 3n^{1/2}$ (by $z$-discordance) and $\|x\|_2 = \|z\|_2 = n^{1/2}$:
\begin{align*}
	x^* W x - z^* W z & = \Re\{(x-z)^*W(x+z)\}\\
					  & \leq \|x-z\|_2 \cdot \opnorm{W} \cdot \|x+z\|_2 \\
					  & \leq 6n \|x-z\|_2.
\end{align*}
Combine this and~\eqref{eq:l2correlation} with~\eqref{eq:l2error_intermediate} to obtain $\|x-z\|_2 \leq 12\sigma$. %
\end{proof}
Note that the constant 12 is pessimistic, because we used $n+|z^*x| \geq n$ even though it is closer to $2n$. Assuming $\|x-z\|_2 \leq \alpha \sigma$ and $\sigma \leq t\sqrt{n}$, the argument above can be bootstrapped to reduce the constant. One obtains $\|x-z\|_2 \leq \alpha_k \sigma$ for all $k$, with $\alpha_0 = 12$ and $\alpha_{k+1} = 6 + (t/2)^2\alpha_k^3$. For $t < 1/6\sqrt{2}$ small enough, $\alpha_k$ converges arbitrarily close to 6. For example, if $\sigma \leq \sqrt{n}/12$, then $\|x-z\|_2 \leq 6.5\sigma$.

The next lemma establishes a bound on the largest individual error, $\|x-z\|_\infty$, after proper global phase alignment of $x$ and $z$.
Interestingly, for $\sigma \ll n^{1/4}$, the bound shows that individual errors decay, uniformly, as $n$ increases.

\begin{lemma} \label{lem:deltainfty}
If $W$ is $z$-discordant and $x$ is a global optimizer of~\eqref{eq:P} with global phase such that $z^*x = |z^*x|$ (that is, the phase that minimizes the $\ell_2$ distance), then
\begin{align*}
	\|x-z\|_\infty & \leq 6\left( \sqrt{\log n} + 29\sigma \right) \sigma n^{-1/2}. %
\end{align*}
\end{lemma}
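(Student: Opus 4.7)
The plan is to combine the first-order optimality conditions of~\eqref{eq:P} with the $\ell_2$ error bound from Lemma~\ref{lemma:Delta_L2bound} to control each coordinate deviation $|x_i - z_i|$ separately.

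First, I would derive an explicit entrywise formula for $x_i$. Fix any $i$ and set $v_i = \sum_{j\neq i} C_{ij} x_j$; because $C_{ii} = 1$, the portion of $x^*Cx$ depending on $x_i$ equals $1 + 2\Re\{\overline{x_i} v_i\}$, so global maximality forces $x_i = v_i/|v_i|$ whenever $v_i \neq 0$. Using $(Cx)_i = x_i + v_i$, this is equivalent to the clean form $x_i = (Cx)_i/|(Cx)_i|$ with $D_{ii} := \Re\{(Cx)_i\overline{x_i}\} = |(Cx)_i|$. Substituting $C = zz^* + \sigma W$ and $\alpha := z^*x \geq 0$ yields
$$x_i = \frac{\alpha z_i + \sigma(Wx)_i}{|\alpha z_i + \sigma(Wx)_i|}, \qquad D_{ii} = |\alpha z_i + \sigma(Wx)_i|.$$

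Next, from this explicit formula I would extract a pointwise master inequality. Direct manipulation gives
$$|x_i - z_i| = \frac{|(\alpha - D_{ii}) z_i + \sigma(Wx)_i|}{D_{ii}} \leq \frac{|\alpha - D_{ii}| + \sigma|(Wx)_i|}{D_{ii}} \leq \frac{2\sigma|(Wx)_i|}{D_{ii}},$$
where the last step uses the reverse triangle inequality $|\alpha - D_{ii}| = \bigl|\,|\alpha z_i| - |\alpha z_i + \sigma(Wx)_i|\,\bigr| \leq \sigma|(Wx)_i|$. Everything thereafter is about controlling the numerator and denominator.

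To bound the numerator, I would split $Wx = Wz + W(x-z)$ and invoke $z$-discordance together with Lemma~\ref{lemma:Delta_L2bound}:
$$|(Wx)_i| \leq \|Wz\|_\infty + \opnorm{W}\,\|x-z\|_2 \leq 3\sqrt{n\log n} + 36\sigma\sqrt{n} = 3\sqrt{n}\bigl(\sqrt{\log n}+12\sigma\bigr).$$
For the denominator, $D_{ii} \geq \alpha - \sigma|(Wx)_i|$ combined with $\alpha = n - \tfrac{1}{2}\|x-z\|_2^2 \geq n - 72\sigma^2$ (again from Lemma~\ref{lemma:Delta_L2bound}) yields $D_{ii} \geq n - 72\sigma^2 - 3\sigma\sqrt{n}(\sqrt{\log n}+12\sigma)$. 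Substituting into the master inequality gives an explicit bound of shape $\frac{6\sigma\sqrt{n}(\sqrt{\log n}+12\sigma)}{D_{ii}}$.

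The main obstacle is the final arithmetic rearrangement into the claimed form $6\sigma(\sqrt{\log n}+29\sigma)n^{-1/2}$: the gap between the naive numerator coefficient $12$ and the stated coefficient $29$ must absorb the shortfall of $D_{ii}$ relative to $n$. A direct calculation reduces this to a polynomial inequality in $\sigma$ which holds precisely in the regime where the target bound is below the a priori bound $|x_i - z_i| \leq 2$; in the complementary regime the statement is automatic.
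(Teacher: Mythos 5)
Your setup through the numerator bound is sound and close in spirit to the paper's argument: coordinate-wise optimality does force $x_i = (Cx)_i/|(Cx)_i|$, and your bound $\|Wx\|_\infty \leq 3\sqrt{n\log n} + 36\sigma\sqrt{n}$ is exactly the paper's~\eqref{eq:Wxinftybound}. The gap is in the last step, and it is genuine. Your master inequality divides by $D_{ii} = |(Cx)_i| = |\alpha z_i + \sigma(Wx)_i|$, which you can only lower-bound by $\alpha - \sigma|(Wx)_i| \geq n - 72\sigma^2 - 3\sigma\sqrt{n}(\sqrt{\log n}+12\sigma)$; the loss $\sigma\|Wx\|_\infty$ in the denominator is \emph{not} negligible relative to $n$ in the regime of interest (for $\sigma$ of order $n^{1/4}$ it is a constant fraction of $n$), and it is precisely what your deferred ``direct calculation'' must absorb. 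That calculation does not go through: your claim that the required polynomial inequality ``holds precisely when the target bound is below $2$'' is false. Writing it out, you need $(72\sigma + 3\sqrt{n\log n} + 36\sigma\sqrt{n})(\sqrt{\log n}+29\sigma) \leq 17n$, and at, e.g., $n=100$, $\sigma=0.28$ the left side is about $1.9\times 10^3 > 1700$, while the target $6\sigma(\sqrt{\log n}+29\sigma)n^{-1/2} \approx 1.72 < 2$, so the trivial fallback $|x_i-z_i|\leq 2$ does not apply either; concretely, your chain only yields $\approx 1.92$ there. (The same failure occurs, e.g., at $n=400$, $\sigma=0.42$.) Since the lemma asserts the bound for all $n\geq 2$ and all $\sigma$, the proof as written does not establish it.

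The fix is to avoid putting $\sigma|(Wx)_i|$ in the denominator. The paper compares $x$ with the feasible point obtained by replacing $x_i$ by $z_i$ (equivalently, take the real part of $\overline{z_i}(z_i-x_i)$ against your explicit formula, using $|x_i-z_i|^2 = 2\inner{z_i-x_i}{z_i}$), which yields $|z^*x|\,|x_i-z_i|^2 \leq 2\sigma\,|x_i-z_i|\,|(Wx)_i|$, i.e.\ a denominator of $|z^*x| \geq n - 72\sigma^2$ only. The remaining $72\sigma^2$ correction is then handled \emph{additively}: moving $72\sigma^2\|x-z\|_\infty$ to the right and using $\|x-z\|_\infty \leq 2$ gives $\|x-z\|_\infty \leq 2(\|Wx\|_\infty + 72\sigma)\sigma n^{-1}$, and plugging in your numerator bound produces $6(\sqrt{\log n}+12\sigma+24\sigma/\sqrt{n})\sigma n^{-1/2}$, which gives the constant $29$ for all $n\geq 2$ since $24/\sqrt{2}\leq 17$. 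With that modification your argument becomes essentially the paper's proof.
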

\begin{proof}
We wish to upper bound, for all $i\in \{1, 2, \ldots, n\}$, the value of $|x_i - z_i|$.
Let $e_i \in \RR^n$ denote the $i$th vector of the canonical basis (its $i$th entry is 1 whereas all other entries are zero). Consider $\hat x = x + (z_i-x_i)e_i$, a feasible point of~\eqref{eq:P} obtained from the optimal $x$ by changing its $i$th entry to the corresponding entry of $z$. Since $x$ is optimal, it performs at least as well as $\hat x$ according to the cost function of~\eqref{eq:P}:
\begin{align}
	x^*Cx \geq \hat x^* C \hat x = x^*Cx + |z_i-x_i|^2 C_{ii} + 2\inner{z_i-x_i}{(Cx)_i},
	\label{eq:ineqlinfty}
\end{align}
with the (real) inner product~\eqref{eq:inner} over $\mathbb{C}$. Since $C_{ii} = 1 \geq 0$,\footnote{The inequality is independent of $C_{ii}$. Assuming  nonnegativity merely eases the exposition.} it follows that the right-most term is nonpositive. Using $C = zz^* + \sigma W$, we have
\begin{align*}
	\inner{z_i-x_i}{(z^*x) z_i + \sigma (Wx)_i} \leq 0.
\end{align*}
Observe that
\begin{align}
	|x_i-z_i|^2 = 2(1 - \inner{x_i}{z_i})
	 = 2\inner{z_i - x_i}{z_i},
	\label{eq:observation}
\end{align}
and combine with the assumption $z^*x = |z^*x|$ to obtain
\begin{align}
	|z^*x| |x_i - z_i|^2  \leq 2\sigma \inner{x_i-z_i}{ (Wx)_i} \leq 2\sigma |x_i - z_i| |(Wx)_i|.
	\label{eq:inftyforone}
\end{align}
This holds for all $i$, hence
\begin{align*}
	|z^*x| \|x-z\|_\infty \leq 2\sigma \|Wx\|_\infty.
\end{align*}
Invoke Lemma~\ref{lemma:Delta_L2bound}, namely, $|z^*x| \geq n - 72\sigma^2$,
to get
\begin{align*}
	n\|x-z\|_\infty \leq 2\sigma \|Wx\|_\infty + 72\sigma^2 \|x-z\|_\infty.
\end{align*}
Since $\|x-z\|_\infty \leq 2$, it finally comes that
\begin{align*}
	\|x - z\|_\infty & \leq 2 \left( \|Wx\|_\infty + 72\sigma \right) \sigma n^{-1}.
\end{align*}
We discuss the problem of bounding $\|Wx\|_\infty$ in more details later on. For now, we simply use the suboptimal bound~\eqref{eq:Wxinftybound} (obtained independently of the present lemma), i.e., $\|Wx\|_\infty \leq 3\sqrt{n\log n} + 36\sigma \sqrt{n}$. Then, for all $n \geq 2$, using $24/\sqrt{2} \leq 17$,
\begin{align*}
\|x - z\|_\infty & \leq 6 \left( \sqrt{\log n} + 12\sigma + 24\frac{\sigma}{\sqrt{n}} \right) \sigma n^{-1/2} \leq 6 \left( \sqrt{\log n} + 29\sigma \right) \sigma n^{-1/2}.
\end{align*}
(For $\sigma \ll n^{1/2}$, the constant $29$ could be replaced by one arbitrarily close to $12$.)
\end{proof}

\subsection{Optimality conditions for~\eqref{eq:SDP}}

The global optimizers of the semidefinite program~\eqref{eq:SDP} can be characterized completely via the Karush-Kuhn-Tucker (KKT) conditions.
\begin{lemma}\label{lem:KKTSDP}
A Hermitian matrix $X \in \CC^{n\times n}$ is a global optimizer of~\eqref{eq:SDP} if and only if there exists a Hermitian matrix $\hat S\in\CC^{n\times n}$
such that all of the following hold:
\begin{enumerate}
\item $\diag(X) = \mathds{1}$;
\item $X \succeq 0$;
\item $\hat SX = 0$;
\item $\hat S+C$ is (real) diagonal; and %
\item $\hat S \succeq 0$.
\end{enumerate}
If, furthermore, $\rank(\hat S) = n-1$, then $X$ has rank one and is the unique global optimizer of~\eqref{eq:SDP}.
\end{lemma}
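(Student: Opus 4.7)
The plan is to read off conditions 1--5 as the KKT system for the semidefinite pair consisting of~\eqref{eq:SDP} and its Lagrangian dual, and then to deduce the uniqueness statement from the kernel structure of $\hat S$. First I would form the Lagrangian dual: introducing a real diagonal multiplier $\Lambda$ for the equality constraint $\diag(X)=\mathds{1}$ and a Hermitian positive semidefinite multiplier $\hat S$ for $X \succeq 0$, the dual problem reads
\[
\min_{\hat S \succeq 0,\ \Lambda \text{ diagonal}} \trace{\Lambda} \ \text{ subject to } \ \Lambda = \hat S + C,
\]
so that conditions 4--5 are exactly dual feasibility, and conditions 1--2 are primal feasibility. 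Slater's condition is immediate on both sides ($X = I_n$ is strictly primal feasible; $\hat S = t I_n - C + \ddiag(C)$ is strictly dual feasible for all sufficiently large $t$), which yields strong duality and attainment of both optima.

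For the equivalence, sufficiency of 1--5 follows from the short computation
\[
\trace{CX} = \trace{(\hat S + C)X} - \trace{\hat S X} = \trace{\Lambda X} - 0 = \trace{\Lambda} = \trace{\hat S + C},
\]
using condition 4 to write $\hat S + C = \Lambda$ diagonal, condition 1 to collapse $\trace{\Lambda X}=\trace{\Lambda}$, and condition 3 to drop $\trace{\hat S X}$. Thus $X$ and $\hat S$ attain the primal and dual values. Conversely, if $X$ is primal optimal, strong duality produces a dual optimal $\hat S$ satisfying 4--5 and $\trace{\hat S X} = 0$; because $X, \hat S$ are both Hermitian PSD, the identity $\trace{\hat S X} = \trace{\hat S^{1/2} X \hat S^{1/2}} = 0$ forces the PSD matrix $\hat S^{1/2} X \hat S^{1/2}$ to vanish, and therefore $\hat S X = 0$, recovering condition 3.

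For the rank statement, suppose $(X, \hat S)$ satisfies 1--5 with $\rank(\hat S) = n-1$. The first part of the lemma shows $\hat S$ is a dual optimum. Then for \emph{any} primal optimal $X^\dagger$, complementary slackness with this particular $\hat S$ still applies, giving $\hat S X^\dagger = 0$; hence the range of $X^\dagger$ is contained in the one-dimensional $\ker(\hat S)$, so $X^\dagger = \alpha v v^\ast$ for some unit vector $v$ spanning $\ker(\hat S)$ and some scalar $\alpha \geq 0$. The constraint $\diag(X^\dagger) = \mathds{1}$ then forces $\alpha|v_i|^2 = 1$ for every $i$, which simultaneously pins down $\alpha$ and reveals that $|v_i|$ must be constant in $i$. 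In particular $X^\dagger$ is uniquely determined and of rank one.

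The main obstacle is genuinely mild here: the lemma is a specialization of standard SDP duality to this specific primal-dual pair, so the only slightly non-routine steps are checking Slater (trivial) and the passage from $\trace{\hat S X} = 0$ to $\hat S X = 0$ via the PSD square-root trick. The rank-uniqueness tail reduces to the one-line kernel argument above, and (most importantly in the sequel) the dual certificate $\hat S$ produced here is the object whose spectrum the rest of the paper analyzes.
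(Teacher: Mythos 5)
Your proposal is correct. The equivalence part (conditions 1--5 iff optimal) follows the same route as the paper: write down the dual, check Slater via the identity matrix, and use strong duality plus the positive-semidefinite square-root trick to pass between $\trace{\hat S X}=0$ and $\hat S X=0$; the paper simply outsources this to a textbook reference on KKT conditions for convex cone programs rather than spelling out the trace computation. Where you genuinely diverge is the rank/uniqueness tail. The paper invokes the nondegeneracy machinery of Alizadeh, Haeberly and Overton: $\rank(\hat S)=n-1$ implies dual nondegeneracy (their Theorem 9), which together with dual optimality of $\hat S$ yields uniqueness of the primal solution (their Theorem 10), and only then does $\rank(X)=1$ follow from $\hat S X=0$. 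You instead argue directly: since strong duality holds and $\hat S$ is dual optimal, \emph{every} primal optimum $X^\dagger$ satisfies $\trace{\hat S X^\dagger}=0$, hence $\hat S X^\dagger=0$, hence its range lies in the one-dimensional $\ker(\hat S)$; writing $X^\dagger=\alpha vv^*$ and using $\diag(X^\dagger)=\mathds{1}$ pins down $\alpha$ (and shows $|v_i|$ is constant), so all primal optima coincide and are rank one. Your argument is more elementary and self-contained --- it avoids the dual-nondegeneracy theorems entirely and as a byproduct exhibits the constant-modulus structure of the kernel vector --- while the paper's citation-based route is shorter on the page and situates the lemma within the general complementarity theory for SDPs. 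Both are valid; just make sure, if you keep your version, to state explicitly that zero duality gap is what licenses complementary slackness between $\hat S$ and an \emph{arbitrary} primal optimum $X^\dagger$, which you do via the trace identity.
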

\begin{proof}
These are the KKT conditions of~\eqref{eq:SDP}~\cite[Example 3.36]{ruszczynski2006nonlinear}. Conditions 1 and 2 are primal feasibility, condition 3 is complementary slackness and conditions 4 and 5 encode dual feasibility. Since the identity matrix $I$ is strictly feasible, the \emph{Slater condition} is fulfilled. This ensures the KKT conditions are necessary and sufficient for global optimality~\cite[Theorem 3.34]{ruszczynski2006nonlinear}. Slater's condition also holds for the dual. Indeed, let $\tilde S = \alpha I - C$, where $\alpha \in \RR$ is such that $\tilde S \succ 0$ (such an $\alpha$ always exists); then $\tilde S + C$ is indeed diagonal and $\tilde S$ is strictly admissible for the dual. This allows to use results from~\cite{alizadeh1997complementarity}. Specifically, assuming $\rank(\hat S) = n-1$, Theorem 9 in~\cite{alizadeh1997complementarity} implies that $\hat S$ is \emph{dual nondegenerate}. Then, since $\hat S$ is also optimal for the dual (by complementary slackness), Theorem 10 in~\cite{alizadeh1997complementarity} guarantees that the primal solution $X$ is unique. Since $X$ is nonzero and $\hat S X = 0$, it must be that $\rank(X) = 1$.
\end{proof}

Certainly, if~\eqref{eq:SDP} admits a rank-one solution, it has to be of the form $X = xx^*$, with $x$ an optimal solution of the original problem~\eqref{eq:P}. Based on this consideration, our proof of Lemma~\ref{lemma:main} goes as follows. We let $x$ denote a global optimizer of~\eqref{eq:P} and we consider $X = xx^*$ as a candidate solution for~\eqref{eq:SDP}. Using the optimality of $x$ and assumptions on the noise, we then construct and verify a dual certificate matrix~$\hat S$ as required per Lemma~\ref{lem:KKTSDP}. In such proofs, one of the nontrivial parts is to guess an analytical form for $\hat S$ given a candidate solution $X$. We achieve this by inspecting the first-order optimality conditions of~\eqref{eq:P} (which $x$ necessarily satisfies). The main difficulty is then to show the suitability of the candidate $S$, as it depends nonlinearly on the global optimum $x$, which itself is a complicated function of the noise $W$. We show feasibility of $S$ via a program of inequalities, relying on $z$-discordance of the noise~$W$ (see Definition~\ref{def:discordant}).

\subsection{Construction of the dual certificate $S$}

Every global optimizer of the combinatorial problem~\eqref{eq:P} must, a fortiori, satisfy first-order necessary optimality conditions. We derive those now.

Endow the complex plane $\CC$ with the Euclidean metric
\begin{align}
	\inner{y_1}{y_2} = \Re\{ y_1^* y_2^{} \}.
	\label{eq:inner}
\end{align}
This is equivalent to viewing $\CC$ as $\RR^2$ with the canonical inner product, using the real and imaginary parts of a complex number as its first and second coordinates. Denote the complex circle by
$$\mathcal{S} = \{ y \in \CC : y^*y = 1 \}.$$
The circle can be seen as a submanifold of $\CC$, with tangent space at each $y$ given by (simply differentiating the constraint):
$$T_y\mathcal{S} = \{ \dot y \in \CC : \dot y^* y + y^* \dot y = 0 \} = \{ \dot y \in \CC : \inner{y}{\dot y} = 0 \}.$$
Restricting the Euclidean inner product to each tangent space equips $\mathcal{S}$ with a Riemannian submanifold geometry. The search space of~\eqref{eq:P} is exactly $\mathcal{S}^n$, itself a Riemannian submanifold of $\CC^n$ with the product geometry. Thus, problem~\eqref{eq:P} consists in \emph{minimizing} a smooth function
\begin{align*}
	g(x) & = -x^*Cx
\end{align*}
over the smooth Riemannian manifold $\mathcal{S}^n$. Therefore, the first-order necessary optimality conditions for~\eqref{eq:P} (i.e., the KKT conditions) can be stated simply as $\operatorname{grad} g(x) = 0$, where $\operatorname{grad} g(x)$ is the Riemannian gradient of $g$ at $x \in \mathcal{S}^n$~\cite{AMS08}. This gradient is given by the orthogonal projection
of the Euclidean (the classical) gradient of $g$ onto the tangent space of $\mathcal{S}^n$ at $x$~\cite[eq.\,(3.37)]{AMS08},
\begin{align}
	T_x\mathcal{S}^n & = T_{x_1}\mathcal{S} \times \cdots \times \nonumber T_{x_n}\mathcal{S}\\
		& = \left\{ \dot x \in \mathbb{C}^n : \Re\{\diag(\dot x x^*)\} = 0 \right\}.
	\label{eq:TxSn}
\end{align}
The projector and the Euclidean gradient are given respectively by:
\begin{align}
	 \operatorname{Proj}_x & \colon \CC^n \to T_x\mathcal{S}^n \colon \dot x \mapsto \operatorname{Proj}_x \dot x = \dot x - \Re\{\ddiag(\dot x x^*)\}x, \nonumber\\
	\nabla g(x) & = -2Cx, \nonumber
\end{align}
where $\ddiag \colon \CC^{n\times n} \to \CC^{n\times n}$ sets all off-diagonal entries of a matrix to zero.
Thus, for $x$ a global optimizer of~\eqref{eq:P}, it holds that
\begin{align}
	0 = \operatorname{grad} g(x)  = \operatorname{Proj}_x \nabla g(x) %
	& = 2(\Re\{\ddiag(Cx x^*)\} - C)x.
	\label{eq:gradg}
\end{align}
This suggests the following definitions:
\begin{align}
	X & = xx^*, & S & = \Re\{\ddiag(Cx x^*)\} - C.
	\label{eq:S}
\end{align}
Note that $S$ is Hermitian and $Sx = 0$. Referring to the KKT conditions in Lemma~\ref{lem:KKTSDP}, it follows immediately that $X$ is feasible for~\eqref{eq:SDP} (conditions 1 and 2); that $SX = (Sx)x^* = 0$ (condition 3); and that $S+C$ is a diagonal matrix (condition 4). It thus only remains to show that $S$ is also positive semidefinite and has rank $n-1$. If such is the case, then $X$ is the unique global optimizer of~\eqref{eq:SDP}.
Note the special role of the first-order necessary optimality conditions: they guarantee complementary slackness, without requiring further work.

We will also use second-order necessary optimality conditions, namely, that the (Riemannian) Hessian of $g$ at an optimizer $x$ is positive semidefinite on the tangent space~\eqref{eq:TxSn}. The action of the Riemannian Hessian is obtained by projecting the directional derivatives of the Riemannian gradient~\cite[eq.\,(5.15)]{AMS08}. Explicitly, for any tangent vector $\dot x \in T_x\mathcal{S}^n$, one can compute that (with $\operatorname{D}\!h(x)[\dot x]$ denoting the directional derivative of $h$ at $x$ along $\dot x$)
\begin{align*}
	\operatorname{Hess} g(x)[\dot x] = \operatorname{Proj}_x \operatorname{D} \operatorname{grad} g(x)[\dot x] = \operatorname{Proj}_x 2S\dot x.
\end{align*}
Thus, if $x$ is a (local) optimizer, then, since $\operatorname{Proj}_x$ is self-adjoint,
\begin{align*}
	\inner{\dot x}{\operatorname{Hess} g(x)[\dot x]} = 2\inner{\dot x}{S \dot x} \geq 0
\end{align*}
for all $\dot x \in T_x\mathcal{S}^n$: a necessary but insufficient step towards making $S$ positive semidefinite. We will later use this condition along selected directions.

The following lemma further shows that $S$ is the right candidate dual certificate. More precisely, for $x$ a critical point of~\eqref{eq:P}, it is necessary and sufficient for $S$ to be positive semidefinite in order for $X = xx^*$ to be optimal for~\eqref{eq:SDP}. This confirms that, by studying $S$, nothing is lost with respect to the original question. See also~\cite{boumal2015staircase}.
\begin{lemma}\label{lem:uniqueS}
A feasible $X$ (of any rank) for~\eqref{eq:SDP} is optimal if and only if $S = \Re\{\ddiag(CX)\} - C$~\eqref{eq:S} is positive semidefinite. There exists no other certificate.
\end{lemma}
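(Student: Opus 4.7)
The plan is to collapse the five KKT conditions of Lemma~\ref{lem:KKTSDP} into the single requirement $S \succeq 0$, by observing that for \emph{any} primal feasible $X$, the candidate $S = \Re\{\ddiag(CX)\} - C$ already satisfies KKT conditions (1), (2), (4) and the trace form of (3). By construction $S + C = \Re\{\ddiag(CX)\}$ is real diagonal, giving (4). For the complementary slackness trace equality, I would compute
\begin{equation*}
\trace{SX} \;=\; \trace{\Re\{\ddiag(CX)\}\,X} \;-\; \trace{CX}.
\end{equation*}
Because $\Re\{\ddiag(CX)\}$ is diagonal with real entries and $\diag(X)=\mathds{1}$, the first term collapses to $\sum_i \Re\{(CX)_{ii}\} = \Re\{\trace{CX}\} = \trace{CX}$, where the last equality uses that the trace of a product of two Hermitian matrices is real. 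Hence $\trace{SX}=0$ for every feasible $X$.

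For the ``if'' direction, if $S\succeq 0$ then combined with $X\succeq 0$ the identity $\trace{SX}=0$ forces $SX=0$, so condition (3) holds and all of (1)--(5) are satisfied; Lemma~\ref{lem:KKTSDP} then yields optimality of $X$. For the ``only if'' direction together with uniqueness of the certificate, I would start from an arbitrary dual certificate $\hat S$ provided by Lemma~\ref{lem:KKTSDP} and rewrite $\hat S X = 0$ as $(\hat S + C)X = CX$. Since $\hat S + C$ is diagonal, the $(i,i)$ entry of the left-hand side is $(\hat S + C)_{ii}\,X_{ii} = (\hat S + C)_{ii}$, so $(\hat S + C)_{ii} = (CX)_{ii}$ for every $i$. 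Using that $\hat S + C$ is real diagonal, this reads $\hat S + C = \Re\{\ddiag(CX)\}$, i.e.\ $\hat S = S$. Thus $S$ is the unique dual certificate, and in particular it must be positive semidefinite whenever $X$ is optimal.

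I do not expect a genuine obstacle here: the only slightly delicate step is the bookkeeping identity $\trace{\Re\{\ddiag(CX)\}X} = \trace{CX}$, which relies on $X_{ii}=1$ together with the reality of $\trace{CX}$. Once that is in hand, the uniqueness of the diagonal of $\hat S + C$ pinned down by $\hat S X = 0$ does all the work. Note that this lemma does not invoke any first- or second-order information coming from~\eqref{eq:P}: it is a purely SDP-level statement, which is precisely what justifies singling out $S = \Re\{\ddiag(CX)\} - C$ as the canonical candidate dual certificate for every feasible $X$, regardless of rank.
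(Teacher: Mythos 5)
Your proposal is correct and follows essentially the same route as the paper: for the \emph{if} direction you set $\hat S = S$, use $\diag(X)=\mathds{1}$ to get $\trace{SX}=0$ and hence $SX=0$ from positive semidefiniteness, and invoke Lemma~\ref{lem:KKTSDP}; for the \emph{only if} direction you take the certificate $\hat S$ from Lemma~\ref{lem:KKTSDP} and use $\hat S X = 0$ with $\hat S + C$ real diagonal to pin down $\hat S = S$, exactly as in the paper. The only difference is that you spell out the bookkeeping identity $\trace{\Re\{\ddiag(CX)\}X}=\trace{CX}$ explicitly, which the paper leaves implicit.
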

\begin{proof}
The \emph{if} part follows from Lemma~\ref{lem:KKTSDP}: set $\hat S = S$ and observe that, since $\trace{SX} = 0$ by construction, and since $S,X\succeq 0$, it follows that $SX = 0$. We now show the \emph{only if} part. Assume $X$ is optimal. Then, by Lemma~\ref{lem:KKTSDP}, there exists $\hat S \succeq 0$ which satisfies $\hat S X = 0$ and $\hat S + C = \hat D$, where $\hat D$ is diagonal. Thus, $CX = (\hat D - \hat S)X = \hat D X$ and $\Re\{\ddiag(CX)\} = \hat D$. Consequently, $S = \hat D - C = \hat S$.
\end{proof}

\subsection{A sufficient condition for rank recovery} %
\label{sec:controlling_large_eigs}

Knowing which certificate $S$~\eqref{eq:S} to verify for a given $x$, it remains to characterize the point $x$. Of course, $x$ is the global optimizer of~\eqref{eq:P}, but this is not a convenient property to exploit. Instead, we let $x$ be a \emph{second-order critical point}\footnote{A second-order critical point satisfies first- and second-order necessary optimality conditions, namely, the gradient is zero and the Hessian is positive semidefinite (if minimizing)~\cite[\S3.2.1]{conn2000trust}.} for~\eqref{eq:P} which outperforms the planted signal $z$ (certainly, the MLE is one such point). In effect, we prove the following, which immediately yields Lemma~\ref{lemma:main}.
\begin{proposition}
	Let $z\in\CC^n$ have unit-modulus entries, $W$ be $z$-discordant, $\sigma \leq \frac{1}{18}n^{1/4}$ and $C = zz^*+\sigma W$. If $x\in\CC^n$ is a second-order critical point for~\eqref{eq:P} such that $x^*Cx \geq z^*Cz$, then $xx^*$ is the unique solution of~\eqref{eq:SDP} and $x$ solves~\eqref{eq:P}.
\end{proposition}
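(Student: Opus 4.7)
The plan is to apply Lemma~\ref{lem:KKTSDP} with $X = xx^*$ and the dual witness $S = \Re\{\ddiag(Cxx^*)\} - C$ from~\eqref{eq:S}. Since $x$ is a critical point of~\eqref{eq:P}, the first-order identity~\eqref{eq:gradg} forces $Sx = 0$, and the remaining KKT conditions (primal feasibility, complementary slackness via $SX = (Sx)x^* = 0$, and diagonality of $S + C$) hold by construction; the entire work reduces to showing $S \succeq 0$ with $\rank(S) = n - 1$. Because $Sx = 0$, any $v \in \CC^n$ decomposes as $v = \alpha x + w$ with $x^* w = 0$, whence $v^* S v = w^* S w$. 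Thus it suffices to prove $w^* S w > 0$ for every nonzero $w$ satisfying $x^* w = 0$. If this succeeds, Lemma~\ref{lem:KKTSDP} delivers $xx^*$ as the unique optimum of~\eqref{eq:SDP}; since it is rank-one and~\eqref{eq:SDP} relaxes~\eqref{eq:P}, $x$ solves~\eqref{eq:P}.

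A subtlety is that Lemma~\ref{lem:deltainfty} is stated for global optimizers, while our hypothesis is only ``second-order critical with $x^* C x \geq z^* C z$.'' Inspection of that proof shows global optimality is invoked only to assert $x^* C x \geq \hat x^* C \hat x$ for the specific perturbation $\hat x = x + (z_i - x_i) e_i$. Writing $D := \Re\{\ddiag(Cxx^*)\}$ and using the first-order identity $(Cx)_i = D_{ii} x_i$, that inequality collapses algebraically to $D_{ii} \geq 1$, i.e.\ $S_{ii} \geq 0$. But second-order optimality applied to the tangent direction $\dot x = i x_i e_i \in T_x \mathcal{S}^n$ yields exactly $\dot x^* S \dot x = S_{ii} \geq 0$. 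Hence the $\ell_\infty$ bound of Lemma~\ref{lem:deltainfty} remains valid under our hypotheses, with second-order optimality standing in for global optimality---this is the ``selected direction'' use of second-order conditions anticipated in Section~\ref{sec:synchroSO2roadmap}.

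With both bounds on $x - z$ in hand, I would expand $w^* S w = w^* D w - |z^* w|^2 - \sigma w^* W w$. For the diagonal term, the first-order identity gives $D_{ii} = (z^* x)\,\Re(z_i \overline{x_i}) + \sigma\,\Re((Wx)_i \overline{x_i}) \geq (z^* x)\bigl(1 - \tfrac12 \|x - z\|_\infty^2\bigr) - \sigma \|Wx\|_\infty$, combined with $z^* x \geq n - 72\sigma^2$ (Lemma~\ref{lemma:Delta_L2bound}) and the crude estimate $\|Wx\|_\infty \leq \|Wz\|_\infty + \opnorm{W}\|x - z\|_2 \leq 3\sqrt{n\log n} + 36\sigma\sqrt n$ from $z$-discordance. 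For the signal term, $x^* w = 0$ makes $z^* w = (z - x)^* w$, so $|z^* w|^2 \leq \|z - x\|_2^2 \|w\|^2 \leq 144\sigma^2 \|w\|^2$. For the noise term, $z$-discordance bounds $|w^* W w| \leq 3 \sqrt n\,\|w\|^2$. Collecting,
\[
\frac{w^* S w}{\|w\|^2} \;\geq\; n - 72\sigma^2 - \tfrac{n}{2}\|x - z\|_\infty^2 - 3\sigma\sqrt{n\log n} - 36\sigma^2\sqrt n - 144\sigma^2 - 3\sigma\sqrt n.
\]
Substituting the $\ell_\infty$ bound and the hypothesis $\sigma \leq \tfrac{1}{18}n^{1/4}$ should render the right-hand side strictly positive, giving $S \succeq 0$ and $\ker S = \mathrm{span}_\CC(x)$, so $\rank(S) = n - 1$, and Lemma~\ref{lem:KKTSDP} closes the argument.

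The main obstacle is bookkeeping of constants. The $36\sigma^2\sqrt n$ piece, arising from the crude $\|W(x - z)\|_\infty \leq \opnorm{W}\|x - z\|_2$, is bounded by $n/9$ under $\sigma \leq \tfrac{1}{18} n^{1/4}$, consuming a fixed fraction of the leading $n$; this is the binding constraint that explains both the $n^{1/4}$ scaling (versus the empirically observed $\sqrt{n/\log n}$, cf.\ Remark~\ref{rem:Wxinfty}) and the specific constant $\tfrac{1}{18}$. Controlling the $\tfrac{n}{2}\|x - z\|_\infty^2$ contribution, which, after plugging in the $\ell_\infty$ bound, becomes at most $O(\sigma^4)$ and hence again a small constant times $n$ in the admissible range, together with the lower-order $n^{3/4}\sqrt{\log n}$ and $\sqrt n$ terms, requires care but introduces no new ideas.
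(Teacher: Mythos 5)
Your architecture is the paper's: the certificate $S$ from the first-order conditions, the reduction via Lemma~\ref{lem:KKTSDP} to proving $w^*Sw>0$ for all nonzero $w$ with $x^*w=0$, the use of Lemma~\ref{lemma:Delta_L2bound} (whose hypotheses indeed only require $\|x\|_2^2=n$ and $x^*Cx\geq z^*Cz$), and the crude estimate $\|Wx\|_\infty\leq\opnorm{W}\|x-z\|_2+\|Wz\|_\infty$. Your observation that the key single-coordinate inequality in Lemma~\ref{lem:deltainfty} reduces, via $(Cx)_i=D_{ii}x_i$, to $S_{ii}\geq 0$ and hence follows from second-order criticality along $\dot x=ix_ie_i$ is correct (and in the spirit of the roadmap).

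The gap is in the one step where you deviate: the diagonal term. Writing $D=\Re\{\ddiag(Cxx^*)\}$, you bound $D_{ii}=(z^*x)\Re(z_i\overline{x_i})+\sigma\Re((Wx)_i\overline{x_i})\geq(z^*x)\bigl(1-\tfrac12\|x-z\|_\infty^2\bigr)-\sigma\|Wx\|_\infty$, which costs an extra $\tfrac n2\|x-z\|_\infty^2\leq 18\sigma^2(\sqrt{\log n}+29\sigma)^2$ relative to the paper. The paper instead uses the same second-order directions $ix_ie_i$ to conclude $(Cx)_i\overline{x_i}=|(Cx)_i|\geq|z^*x|-\sigma|(Wx)_i|$, so $D_{ii}\geq|z^*x|-\sigma\|Wx\|_\infty$ with no $\ell_\infty$ loss at all; this is why the paper's tightness proof never needs Lemma~\ref{lem:deltainfty} (only the $\ell_2$ bound). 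Your extra term is of order $\sigma^4+\sigma^2\log n$, which at $\sigma=\tfrac1{18}n^{1/4}$ is roughly $0.14\,n$ plus lower-order terms, so your inequality does close for large $n$; but the deferred ``bookkeeping'' fails for small $n$: with $\sigma$ at the cap, the subtracted quantity $216\sigma^2+\tfrac n2\|x-z\|_\infty^2+3\sigma\sqrt{n\log n}+36\sigma^2\sqrt n+3\sigma\sqrt n$ exceeds $n$ for $n=2,3,4$ (e.g.\ at $n=4$, $\sigma=\sqrt2/18$, it sums to about $4.13>4$), so the proposition as stated (constant $\tfrac1{18}$, all $n\geq2$) is not established by your chain. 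The repair is simply to adopt the second-order estimate $D_{ii}=|(Cx)_i|$: it removes the $\tfrac n2\|x-z\|_\infty^2$ term, makes the $\ell_\infty$ detour unnecessary, and yields the paper's condition~\eqref{eq:bestcondition}, which does hold down to $n=2$ under $\sigma\leq\tfrac1{18}n^{1/4}$.
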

To this end, we prove that, at such points $x$, the certificate $S$ is positive semidefinite of rank $n-1$.
As a first observation, note that $x$ being critical ($Sx = 0$) implies that $\Re\{(Cx)_i \bar x_i\}x_i = (Cx)_i$ for all $i$. Thus, $(Cx)_i\bar x_i$ is real, and it follows that 
$$
	S = \ddiag(Cx x^*) - C.
$$
Furthermore, since $x$ is second-order critical, the Hessian of $g$ at $x$ is positive semidefinite, implying $\inner{\dot x}{S \dot x} \geq 0$ for all $\dot x \in T_x\mathcal{S}^n$~\eqref{eq:TxSn}. In particular, for all $i$, $\dot x = (jx_i)e_i$ is a tangent vector at $x$ (where $j = \sqrt{-1}$ and $e_i$ is the $i$th canonical basis vector), and it follows that 
$$
	\inner{(jx_i)e_i}{(jx_i)Se_i} = S_{ii} = (Cx)_i\bar x_i - C_{ii} \geq 0.
$$
Since $C_{ii} = 1$ by construction,\footnote{As before, $S$ is independent of $\operatorname{diag}(C)$. Assuming $C_{ii} = 1$ involves no loss of generality.} it follows in particular that $(Cx)_i\bar x_i$ is real and positive. Thus:
\begin{align*}
	(Cx)_i\bar x_i = |(Cx)_i\bar x_i| = |(Cx)_i|.
\end{align*}

We can now turn to lower-bounding the eigenvalues of $S$. Since $Sx = 0$, it is sufficient to study $u^*Su$ for $u\in\mathbb{C}^n$ orthogonal to $x$, that is, $u^*x = 0$. Without loss of generality, fix the global phase of $x$ such that $z^*x = |z^*x|$. Using once more that $C = zz^* + \sigma W$,
\begin{align*}
	u^*Su & = u^* \operatorname{ddiag}(Cxx^*)u - u^*Cu \\
		  & = \sum_{i=1}^{n} |u_i|^2 |(Cx)_i|  - u^*Cu \\
		  & = \sum_{i=1}^{n} |u_i|^2 |(z^*x)z_i + \sigma (Wx)_i|  - |u^*z|^2 - \sigma u^* W u \\
		  & \geq |z^*x|\|u\|_2^2 - \sigma \sum_{i=1}^{n} |u_i|^2 |(Wx)_i|  - |u^*(z-x)|^2 - \sigma \opnorm{W}\|u\|_2^2 \\
		  & \geq \|u\|_2^2 \left( |z^*x| - \sigma \frac{\sum_{i=1}^{n} |u_i|^2 |(Wx)_i|}{\sum_{i=1}^{n} |u_i|^2} - \|z-x\|_2^2 - \sigma \opnorm{W} \right).
\end{align*}
The fraction is a weighted mean of nonnegative numbers, $|(Wx)_i|$ for $i = 1\ldots n$, so that it is bounded by $\|Wx\|_\infty$. Now using $z$-discordance of $W$, invoke Lemma~\ref{lemma:Delta_L2bound} to bound $|z^*x| \geq n - 72\sigma^2$, $\|z-x\|_2^2 \leq 144\sigma^2$, and $\opnorm{W} \leq 3n^{1/2}$ to establish
\begin{align*}
	u^* S u \geq \|u\|_2^2 \big( n - 216\sigma^2 - \sigma \|Wx\|_\infty - 3\sigma n^{1/2} \big).
\end{align*}
As a result, a sufficient condition for $S$ to be positive semidefinite with rank $n-1$ is to have
\begin{align}
	n > \sigma \left( 216\sigma + 3 n^{1/2} + \|Wx\|_\infty \right).
	\label{eq:sufficient1}
\end{align}
At this point, notice that if $\|Wx\|_\infty$ were bounded by $\tilde{\mathcal{O}}(\sqrt{n})$, we would indeed obtain the targeted rate for $\sigma$. Unfortunately, we could not obtain such a bound (see Remark~\ref{rem:Wxinfty}). Instead, we carry on with the following suboptimal argument:
\begin{align*}
	\|Wx\|_\infty %
				  & \leq \|W(x - z)\|_\infty + \|Wz\|_\infty \\
				  & \leq \|W(x - z)\|_2 + \|Wz\|_\infty \\
				  & \leq \opnorm{W} \|x - z\|_2 + \|Wz\|_\infty.
\end{align*}
By $z$-discordance, $\opnorm{W} \leq 3 \sqrt{n}$ and $\|Wz\|_\infty \leq 3\sqrt{n \log n}$. By Lemma~\ref{lemma:Delta_L2bound}, $\|x - z\|_2 \leq 12\sigma$. Thus,
\begin{align}
	\|Wx\|_\infty \leq 36\sigma \sqrt{n} + 3\sqrt{n\log n}.
	\label{eq:Wxinftybound}
\end{align}
Combining with~\eqref{eq:sufficient1} yields the following sufficient condition:
\begin{align}
	\sqrt{n} > 3\sigma \left( 72\sigma n^{-1/2} + 1 + 12\sigma + \sqrt{\log n} \right).
	\label{eq:bestcondition}
\end{align}
This condition involves only $\sigma$ and $n$, and as such is the best result we prove.
The bottleneck is the term in $\sigma^2$, which leads to a bound of the type $\sigma \leq c \cdot n^{1/4}$.
Further inspection of~\eqref{eq:bestcondition} shows $\sigma \leq \frac{1}{18} n^{1/4}$ is indeed a sufficient condition, for $n\geq 2$. This concludes the proof of Lemma~\ref{lemma:main}.

\begin{remark}[About $\|Wx\|_\infty$]\label{rem:Wxinfty}
Each entry $(Wx)_i$ is a sum of $n-1$ random variables in the complex plane. If $W_{ij}$ are standard (complex) Gaussians (so that their phases are uniformly distributed) and if $x$ were independent from $W$, then the quantity $|(Wx)_i|$ would grow as ${\tilde{\mathcal{O}}}(n^{1/2})$. The largest one, $\|Wx\|_\infty$, would grow similarly (as is the case for $\|Wz\|_\infty$, where we do have independence). Thus, we would recover the observed rate for $\sigma$ in Figure~\ref{fig:experiment}, namely, $\sigma \leq \frac{n^{1/2}}{\operatorname{polylog}(n)}$ would suffice to ensure tightness. Likewise, the proof in Lemma~\ref{lem:deltainfty} would lead to $\|x-z\|_\infty = \tilde{\mathcal{O}}(\sigma / \sqrt{n})$. Unfortunately, $W$ and $x$ are not independent: the MLE depends on $W$ in a complicated fashion.

One avenue to improve the current bound might be to expand $x = x(\sigma W)$ around~0, to first order. Some computations (omitted here) show that such a development would lead to $x = z + \frac{\sigma}{n} \Im\left\{ \ddiag(Wzz^*) \right\}(iz) + \epsilon$. Then,
\begin{align*}
	\|Wx\|_\infty \lesssim \|Wz\|_\infty + \frac{\sigma}{n}\|W\Im\left\{ \ddiag(Wzz^*) \right\}z\|_\infty.
\end{align*}
Still assuming Gaussian entries, the last norm appears
to grow like $\tilde{\mathcal{O}}(n)$, which would lead to $\|Wx\|_\infty = \tilde{\mathcal{O}}(\sqrt{n}+\sigma)$, as desired. Unfortunately, controlling the error term $\epsilon$ and its effect on the above bound seems nontrivial, as this involves assessing smoothness of the argmax function $x(\sigma W)$.
\end{remark}

\section{Conclusions} \label{sec:conclusions}

Computing maximum likelihood estimators is important for statistical problems, in particular when exact signal recovery is out of reach. In this paper, we have made partial progress toward understanding when this computation can be executed in polynomial time for the angular synchronization problem. It still remains to improve the proposed result to understand how this MLE can be computed for noise levels as high as those witnessed empirically. As it stands, the suboptimal bound~\eqref{eq:Wxinftybound} is the bottleneck. Reducing it to $\tilde{\mathcal{O}}(\sqrt{n})$ would yield the (tentatively) correct rate, as per Remark~\ref{rem:Wxinfty}.

For many other problems of interest, it has likewise been observed that semidefinite relaxations tend to be tight. Notably, this is the case for synchronization of rotations: a problem similar to angular synchronization but with rotations in $\mathbb{R}^d$ rather than~$\mathbb{R}^2$. Analyzing the latter might prove more difficult, because rotations in $\mathbb{R}^d$ do not commute for $d > 2$. Even for $d=2$, the case of an incomplete graph of (weighted) measurements is of interest, and not covered here. Tightness has also been observed for the Procrustes problem and 
the multi-reference alignment (MRA) problem~\cite{Bandeira_Charikar_Singer_Zhu_Alignment}. The situation for MRA differs markedly from the present setting, as
the noise originates at the vertices of a graph, not on the edges.
Notwithstanding those differences, we hope that the present work will help future attempts to explain the tightness phenomenon in various settings.

\appendix

\section{Wigner matrices are discordant}
\label{apdx:WignerDiscordant}

This appendix is a proof for Proposition~\ref{prop:WignerDiscordant}, namely, that for arbitrary $z\in\CC^n$ such that $|z_1| = \cdots = |z_n| = 1$, complex Wigner matrices are $z$-discordant (Definition~\ref{def:discordant}) with high probability.

A matrix $W$ is $z$-discordant if and only if $\diag(z)^* W \diag(z)$ is $\mathds{1}$-discordant. Since $\diag(z)^* W \diag(z)$ has the same distribution as $W$ (owing to complex normal random variables having uniformly random phase), we may without loss of generality assume $z = \mathds{1}$ in the remainder of the proof.

\begin{enumerate}

\item $\Pr\left\{ \opnorm{W} > 3 n^{1/2} \right\} \leq e^{-n/2} $.

Although tail bounds for the real version of this are well-known (see for example~\cite{VershyninNARandomMatrices,bandeira2014sharp}) and they mostly hold verbatim in the complex case, for the sake of completeness we include a classical argument, based on Slepian's comparison theorem and Gaussian concentration, for a tail bound in the complex valued case.

We will bound the largest eigenvalue of $W$. It is clear that a simple union bound argument will allow us 
to bound also the smallest, and thus bound the largest in magnitude. Let $\lambda_+ = \max_{v\in\CC^n: \|v\|=1}v^\ast W v$ denote the largest eigenvalue of $W$. For any unit-norm $u,v\in\CC^n$, the real valued Gaussian process $X_v =  v^\ast Wv$ satisfies:
\begin{align*}
\EE\left(X_v - X_u\right)^2 & = \EE\left( \sum_{i<j}W_{ij}\left(\overline{v_i}v_j -\overline{u_i}u_j\right)   + W_{ji}\left(\overline{v_j}v_i -\overline{u_j}u_i\right)\right)^2 \\
& = \sum_{i<j}\EE \left[W_{ij}\left(\overline{v_i}v_j -\overline{u_i}u_j\right)   + W_{ji}\left(\overline{v_j}v_i -\overline{u_j}u_i\right)\right]^2.
\end{align*}
The variable $W_{ij}$ has uniformly random phase, hence so does $W_{ij}^2$, so that $\EE W_{ij}^2 = 0$. As a result,
\begin{align*}
\EE\left(X_v - X_u\right)^2 & =  \sum_{i<j}2\EE\left|W_{ij}\right|^2\left|\overline{v_i}v_j -\overline{u_i}u_j\right|^2 \\
& = 2\sum_{i<j}\left|\overline{v_i}v_j -\overline{u_i}u_j\right|^2 \\
& \leq \sum_{i,j}\left|\overline{v_i}v_j -\overline{u_i}u_j\right|^2.
\end{align*}

Note that, since $\|u\|=\|v\|=1$,
\begin{align*}
\sum_{i,j}\left|\overline{v_i}v_j -\overline{u_i}u_j\right|^2 & = \sum_{ij}\left[ |v_i|^2|v_j|^2 + |u_i|^2|u_j|^2 - \overline{v_i}v_ju_i\overline{u_j} - v_i\overline{v_j}\overline{u_i}u_j\right] \\
& = 2- 2\left|v^\ast u\right|^2 \\
& \leq 2\left( 2 -  2\left|v^\ast u\right|\right) \\
& \leq 4\left( 1 -  \Re\left[v^\ast u \right]\right)\\
 & = 2\| v-u \|^2.
\end{align*}

This means that we can use Slepian's comparison theorem (see for example~\cite[Cor.\,3.12]{ledoux1991probability}) to get
\begin{align}\label{Slepians_lemma}
\EE \lambda_+ \leq \sqrt{2}\EE \max_{\tilde{v}\in\RR^{2n}:\|\tilde{v}\|=1} \tilde{v}^T g \leq 2\sqrt{n},
\end{align}
where $g$ is a standard Gaussian vector in $\RR^{2n}$ and $\tilde{v}$ is a vector in $\RR^{2n}$ obtained from $v\in \CC^{n}$ by stacking its real and imaginary parts.

Since $\left| \|W_1\| - \|W_2\| \right| \leq  \|W_1-W_2\| \leq \|W_1-W_2\|_{\mathrm{F}}$, Gaussian concentration~\cite{ledoux1991probability} gives
\begin{align}\label{eq:gaussian_concentration}
\Pr\left\{ \lambda_+ - \EE \lambda_+ \geq t \right\} \leq e^{-t^2/2}.
\end{align}

Using (\ref{Slepians_lemma}) and (\ref{eq:gaussian_concentration}) gives
\[
\Pr\left\{ \opnorm{W} > 3 n^{1/2} \right\} \leq e^{-n/2}.
\]

\item $\Pr\left\{ \|W\mathds{1}\|_\infty > 3\sqrt{n \log n} \right\} \leq 2n^{-5/4}$.

The random vector given by $\frac1{(n-1)^{1/2}}W\mathds{1}$ is jointly Gaussian where the marginal of each entry is a standard complex Gaussian. By a suboptimal union bound argument, %
the maximum absolute value among $k$ standard complex Gaussian random variables (not necessarily independent) is larger than $t$ with probability at most $2ke^{-t^2/4}$. Hence,
\begin{align*}
\Pr\left\{ \|W\mathds{1}\|_\infty > 3\sqrt{n\log n} \right\} & \leq \Pr\left\{ \left\|\frac1{(n-1)^{1/2}}W\mathds{1}\right\|_\infty > 3 \sqrt{\log n} \right\} \\
& \leq 2ne^{-\frac{9}{4} \log n} = 2n^{-5/4}.
\end{align*}

\end{enumerate}

To support the discussion following Proposition~\ref{prop:WignerDiscordant}, we further argue that $$\Pr\left\{  |\mathds{1}^* W \mathds{1}| > n^{3/2} \right\} \leq e^{-n/2}.$$
It is easy to see that $\mathds{1}^* W \mathds{1}$ is a real Gaussian random variable with zero mean and variance $2\frac{n(n-1)}2 = n(n-1)$. This implies that:
\begin{align*}
\Pr\left\{ |\mathds{1}^* W \mathds{1}| > n^{3/2} \right\} & \leq \Pr\left\{ \frac1{(n(n-1))^{1/2}} |\mathds{1}^* W \mathds{1}| > n^{1/2} \right\} \\
&\leq \frac1{\sqrt{2\pi}}\frac1{n^{1/2}}e^{-\frac{n}2} \leq e^{-n/2}.
\end{align*}

\section*{Acknowledgments}
A.\ S.\ Bandeira was supported by AFOSR Grant No.\ FA9550-12-1-0317. Most of this work was done while he was with the Program for Applied and Computational Mathematics at Princeton University.
N.\ Boumal was supported by a Belgian F.R.S.-FNRS fellowship while working at the Universit\'e catholique de Louvain (Belgium), by a Research in Paris fellowship at Inria and ENS, the ``Fonds Sp\'eciaux de Recherche'' (FSR UCLouvain), the Chaire Havas ``Chaire Eco\-no\-mie et gestion des nouvelles don\-n\'ees'' and the ERC Starting Grant SIPA.
A.\ Singer was partially supported by Award Number R01GM090200 from the NIGMS, by Award Numbers FA9550-12-1-0317 and FA9550-13-1-0076 from AFOSR, by
Award Number LTR DTD 06-05-2012 from the Simons Foundation, and by the Moore Foundation.

\bibliographystyle{plain}
\bibliography{rankrecovery}

\end{document}